\documentclass[a4paper,11pt,reqno]{amsart}
\usepackage[T1]{fontenc}
\usepackage[utf8]{inputenc}
\usepackage{lmodern}
\usepackage{amsmath, amsthm, amssymb,amscd, mathrsfs, amsfonts}
\usepackage{hyperref}
\usepackage{times}
\usepackage[all]{xy}
\usepackage{todonotes}
\usepackage{xcolor}
\usepackage{amsthm}                
\usepackage[margin=1in]{geometry}  
\usepackage{graphics}
\usepackage{epstopdf}
\usepackage{graphicx}
\usepackage{subfigure}
\usepackage{latexsym}
\usepackage{tikz}
\usepackage{mathdots}

\usepackage{hyperref}
\usepackage{comment}

\usepackage[lite]{amsrefs}

\renewcommand{\PrintDOI}[1]{\href{http://dx.doi.org/\detokenize{#1}}{doi: \detokenize{#1}}%
	\IfEmptyBibField{pages}{, (to appear in print)}{}}

\newtheorem{theorem}{Theorem}[section]

\newtheorem{corollary}[theorem]{Corollary}
\newtheorem{proposition}[theorem]{Proposition}

\theoremstyle{definition}
\newtheorem{definition}[theorem]{Definition}
\newtheorem{example}[theorem]{Example}
\theoremstyle{remark}
\newtheorem{remark}[theorem]{Remark}
\numberwithin{equation}{section}

\bibliographystyle{ams}

\begin{document}

\title{Framed Knots}

\author{Mohamed Elhamdadi} 
\address{Department of Mathematics, 
University of South Florida, Tampa, FL 33620, U.S.A.} 
\email{emohamed@math.usf.edu} 

\author{Mustafa Hajij}
\address{Department of Computer Science, Ohio State University, 
Columbus, OH 43210 USA}
\email{mhajij.1@osu.edu}

\author{Kyle Istvan}
\address{}
\email{KyleIstvan@gmail.com}

\begin{abstract} This article gives a foundational account of various characterizations of framed links in the $3$-sphere.  
\end{abstract}

\maketitle

\tableofcontents

\section{Introduction}
The field of knot theory has been a fundamental area of research for over 150 years. Framed knots are an extension that we can visualize as closed loops of knotted flat ribbons.  Interest  in framed knots follows from the critical role they play in  low dimensional topology. For example, a foundational result in 
the theory of $3$-manifolds, the Lickorish-Wallace Theorem \cite{Lickorish}, states that any closed, orientable, connected 3-manifold can be realized by performing a topological operation known as integer surgery on some framed link in the $3$-dimensional sphere $S^{3}.$  A diagrammatic process known as Kirby Calculus \cite{Kirby} allows us to determine homeomorphic equivalence of 3-manifolds given by such a description with only a couple simple moves on framed link diagrams, similar to the Reidemeister theorem for knots and links.  Framings also appear quite often when dealing with polynomial link invariants.   Framed links can even be used to encode handlebody decompositions of 4-manifolds, though we will restrict ourselves to 3-dimensional topological considerations in the pages to follow. The purpose of this article is to introduce the reader to various characterizations of framed knots and links, to show (or at least intuitively justify) their equivalence, and to discuss their application to $3$-manifold topology.  To that end, this text assumes the reader has basic understanding of algebraic topology. We will include proofs where appropriate, and provide citations whenever the complexity of such details falls outside the scope of this paper.  We will also provide auxiliary materials to help the reader visualize some of the more intuitive notions.

\section{Basic definitions, theorems, and conventions}


A \textit{knot} in the $3$-sphere $S^3$ is a smooth  one-to-one mapping $f:S^1\longrightarrow S^3$ (see Figure \ref{knot}). Equivalently, a knot can be be thought of as the set $f(S^1)$. We will work with these two definitions interchangeably.
A \textit{link} in $S^3$ is a finite collection of knots, called the \textit{components} of the
link, that do not intersect each other. Two links are considered to be equivalent if one can be deformed into the other without any one of the knots intersecting itself or any other knots\footnote{This is called ambient isotopy in the literature.}.
A \textit{link invariant} is a quantity, defined for each link in $S^3$, that 
 takes the same value for equivalent links\footnote{ The equivalence relation here is ambient isotopy.}. Link invariants play a fundamental role in low-dimensional topology.
In practice we usually work with a \textit{link diagram} of a link $L$. A link diagram is a projection of $L$ onto $S^2$ (resp. $\mathbb{R}^2$)
such that this projection has a finite number of non-tangentional intersection points, called crossings. Each crossing corresponds to exactly two points of the link $L$. See Figure \ref{knot} for an example. To store the relative spatial information in the crossings, we usually draw a small break in the projection of the strand closest to the projection sphere (resp. plane) to indicate that it crosses under the other strand.

\begin{figure}[h]
  \centering
   {\includegraphics[scale=0.085]{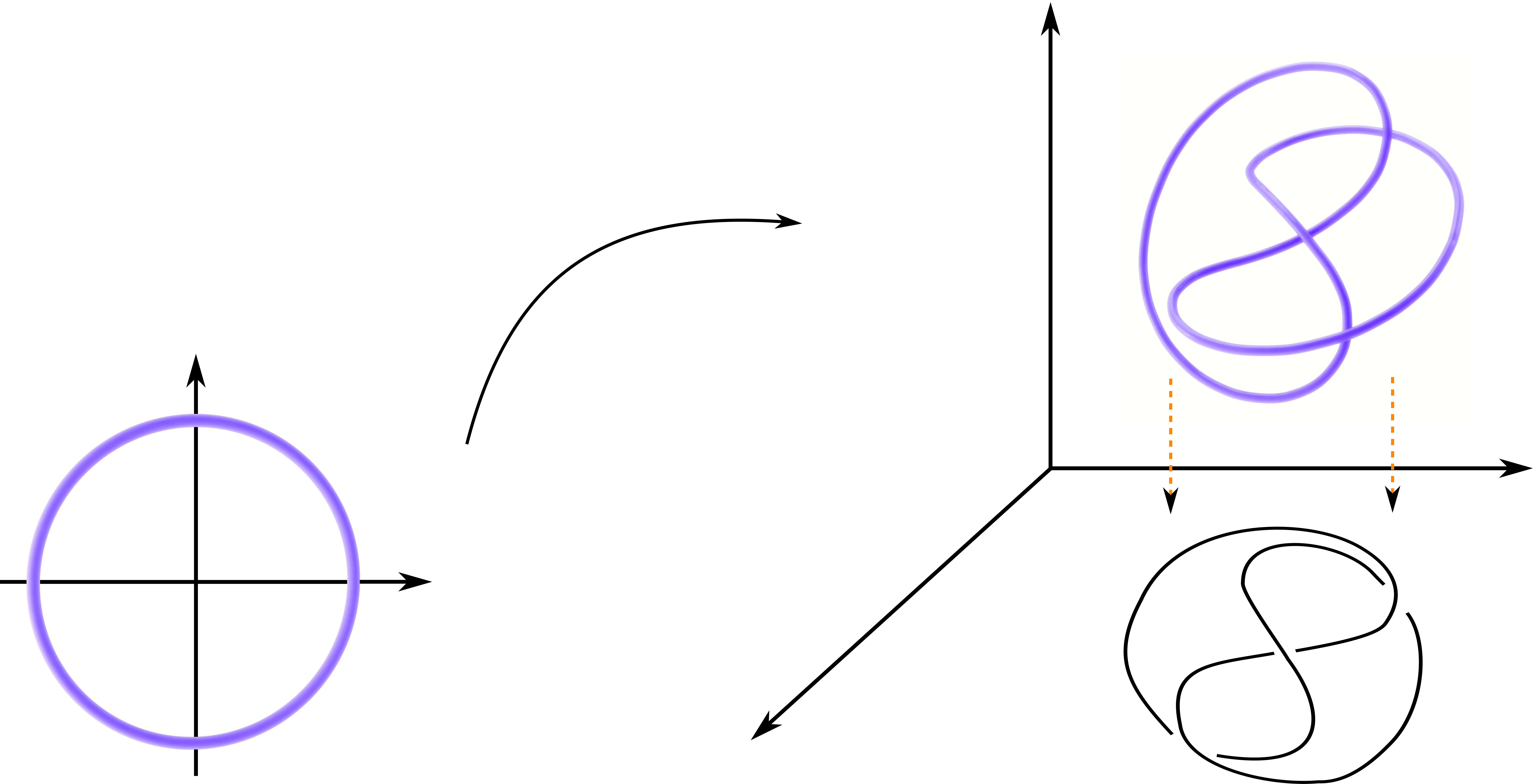}
\put(-330,95){$S^1$}
 \put(-240,150){$f$}
 \put(-170,150){$S^3$}
  \caption{A knot is a smooth-linear one-to-one mapping $f:S^1\longrightarrow S^3$. A knot diagram is a obtained by projecting this onto a plane. 
  }
  \label{knot}}
\end{figure}

A {\em{framed knot}} $(K,V)$ in $S^{3}$ is a knot $K$ equipped with
a continuous nonvanishing vector field $V$ normal to the knot, called a {\em{framing}} (see Figure~\ref{framed knot}). The magnitude of these vectors is largely irrelevant. 
Similarly, a {\em{framed link}} in $S^{3}$ is a link where each component knot is equipped with a framing.
A framed knot can be visualized
as a tangled ribbon that has had its two ends glued after an even number of half-twists, so as to yield an orientable surface. Note that this means we exclude the cases in which the ribbon is glued together after an odd number of half-twists, i.e. a Mobius band (see Figure~\ref{tangeld ribbon}). To put it it more precise terms, the ribbon forms an embedded annulus, one of whose boundary components is identified with the specified knot $K$.  For a given knot $K$, two framings on $K$ are considered to be equivalent if one can be transformed into the other by a smooth deformation \footnote{The precise notion of equivalence here is again ambient isotopy.}. This is indeed an equivalence relation on the set of framings, and as such the term ``framing'' will be used to refer to either an equivalence class or a representative vector field, as context dictates.  

Let us quickly demonstrate the equivalence of the definition of a framed knot and the conceptualization of the closed ribbon.  
 Given a framed knot $(K,V)$, we can construct a ribbon by pushing the knot $K$ along the vector field $V$, sweeping out an area.  Conversely, given a closed orientable ribbon in $S^{3}$, we can construct a framed knot by considering one of its boundary components to be the knot $K$, and choosing the vector field $V$ to lie in the ribbon, perpendicular to $K$ at every point of the knot.  The magnitude of the vector field is unimportant, and shall be ignored for the remainder of this text. (see Figure~\ref{all}).  
\begin{figure}[!h]
\centering
\begin{minipage}{.5\textwidth}
  \centering
  \includegraphics[width=.9\linewidth]{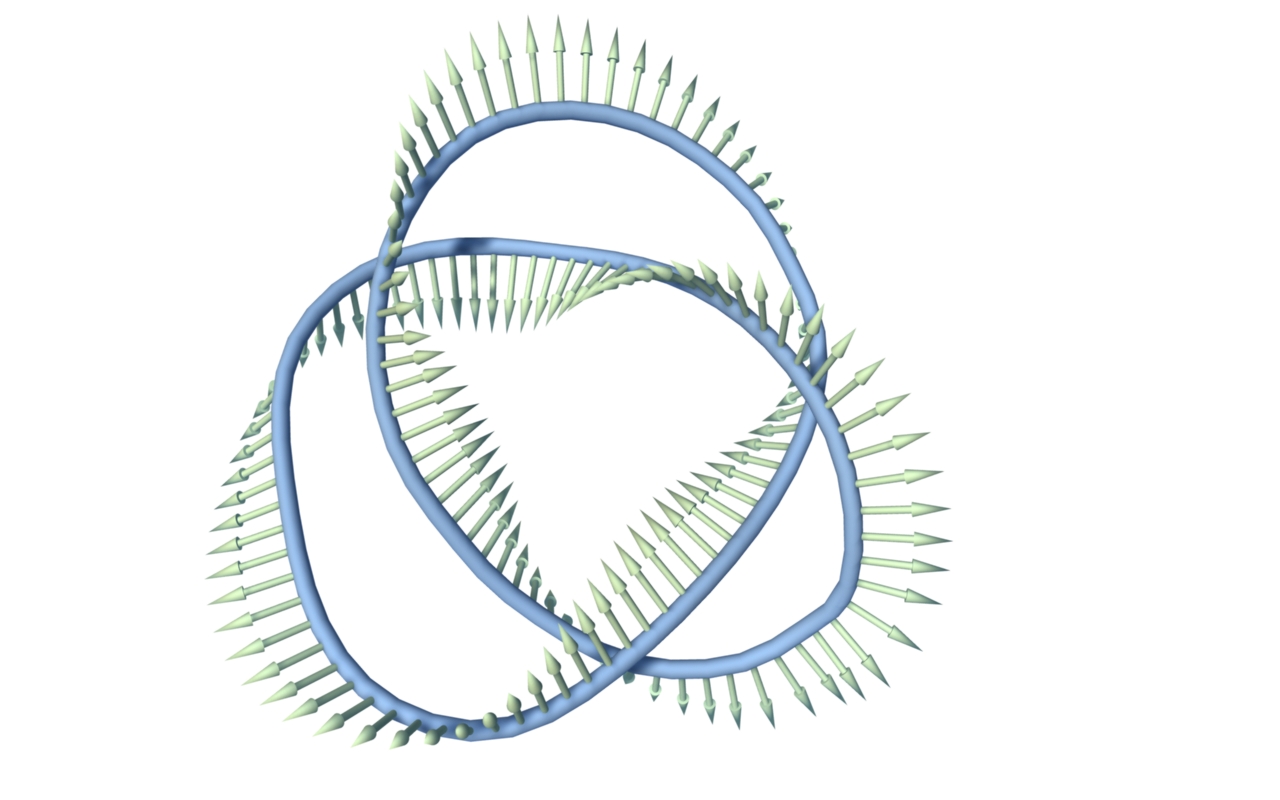}
  \caption{A framed Trefoil.}
  \label{framed knot}
\end{minipage}%
\begin{minipage}{.5\textwidth}
  \centering
  \includegraphics[width=.9\linewidth]{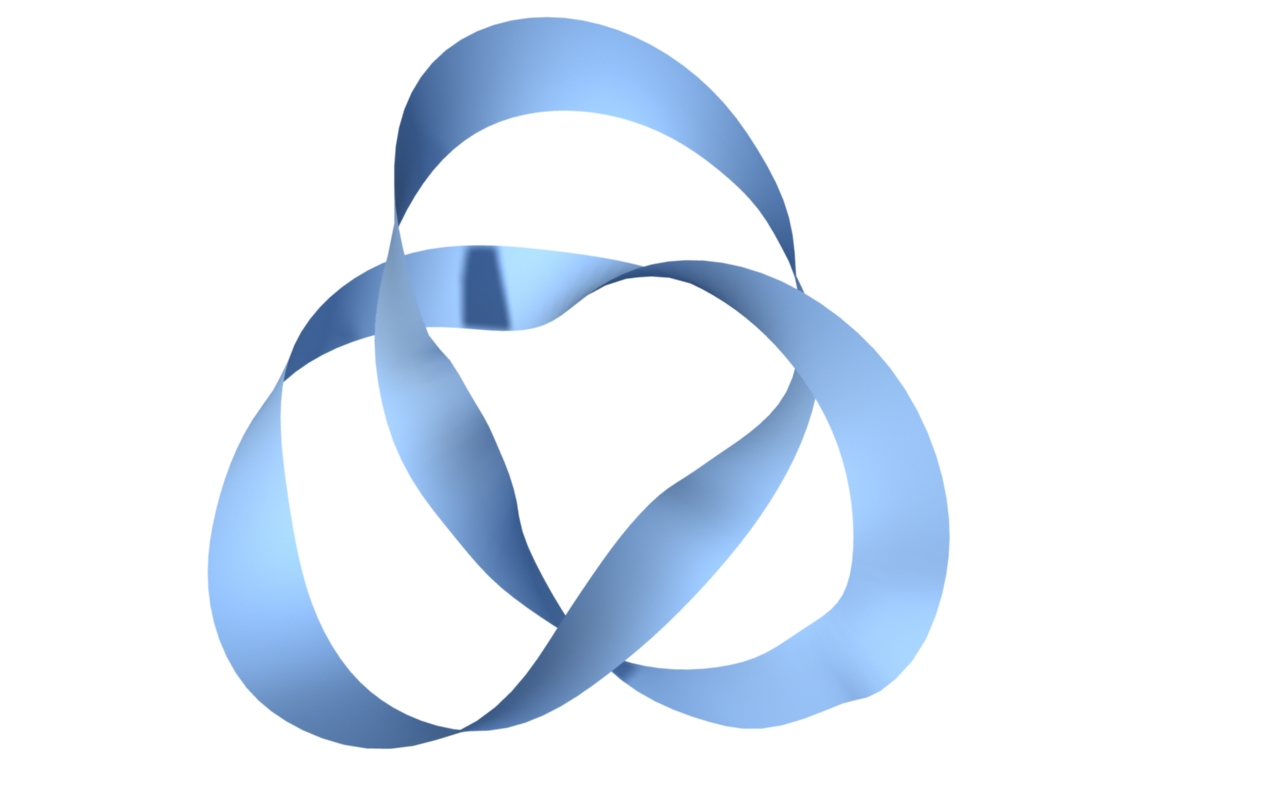}
  \caption{A tangled ribbon.}
  \label{tangeld ribbon}
\end{minipage}
\begin{minipage}{.5\textwidth}
  \centering
  \includegraphics[width=.9\linewidth]{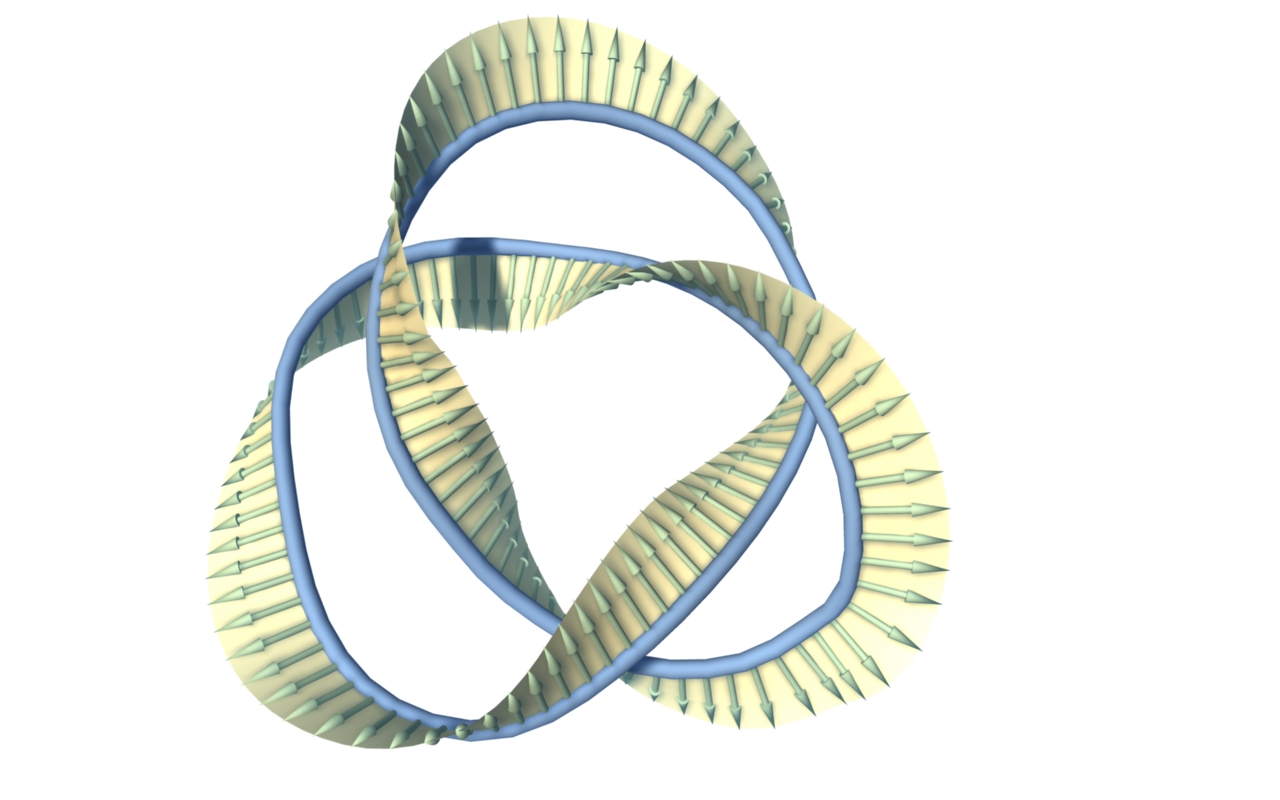}
  \caption{Framed knots $\leftrightarrow$ Tangled ribbons.}
  \label{all}
\end{minipage}
\end{figure}

Given a knot, one can define infinitely many framings on it. See also the \href{http://www.youtube.com/watch?v=KxEBhD0C2Pw}{framed knots movie representation}\footnote{The framed knot movie representation can be viewed by visiting http://www.youtube.com/watch?v=KxEBhD0C2Pw.}. 
Suppose that we are given a knot with a fixed framing.  One may obtain a new
framing from the existing one by cutting the ribbon and twisting it a nonzero integer multiple of $2\pi$ times around the knot, and then reconnecting the edges.
This operation leaves the knot itself fixed, and the reader should intuit that this is not a smooth deformation of the vector field.  It is in fact impossible to have {\em{any}} smooth deformation between these two vector fields, but this is more easily shown using some of the characterizations that follow.  

In the context of the previous operation, we see that the framing is associated with the number of ``twists'' the vector field performs around the knot, although it should not be immediately obvious how we can make such a definition precise.  How does one count the number of twists a vector field makes around an object that is itself tangled up in the $3$-sphere?  What accounts for a clockwise rotation, vs counterclockwise? As we will see, it is in fact possible to make such a definition, and knowing how many times the vector field is twisted around the knot allows one to completely determine the vector field up to a smooth deformation.  The equivalence class of the framing is
determined completely by this integer number of twists, called the \textit{{framing integer}}. Our next goal is to show how the framing integer can be easily computed from a diagram 
using the {\em{linking number}}.

\section{Writhe, linking and self-linking numbers}
\label{linking number}

In practice, knots and links are frequently represented via diagrams.  It is useful then to have a combinatorial (diagrammatic) method for computing the framing integer.  It turns out to be surprisingly easy to do, using the notion of the {\em{linking number}}. In what follows an \textit{oriented knot} is a knot which has been given an orientation. Similarly, an oriented link is a link for which each of its component has been given an orientation.
\begin{definition} \cite{Lickorish} \label{lk}
 Let $J$ and $K$ be two disjoint oriented knots, represented by a link diagram $D$.  The {\textit{linking number}} of $J$ and $K$, denoted $lk(J,K)$, is an integer, defined to be one half of the sum of the signs (see Figure \ref{sign}) of every crossing between $J$ and $K$ in the diagram $D$.  \end{definition}
 In this definition, we reiterate that self-crossings of the knots are not included in the summation.

\begin{figure}[htb]
  \centering
   {\includegraphics[scale=0.65]{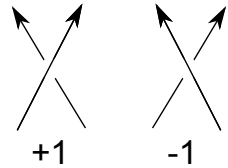}
  \caption{Positive and negative crossings. The sign of a crossing can be determined by the right hand rule.}
  \label{sign}}
\end{figure}

 It is easy to show that  the set of linking numbers is actually an invariant of links. The proof is a direct application of  Reidemeister’s theorem \cite{Reidemeister}, which says two diagrams of links represent the same link if and only if they are related through a finite sequence of three local moves called the Reidemeister moves of Figure \ref{Rmoves} and planar isotopy. We denote the three moves by $\Omega_{1}$, $\Omega_{2}$ and $\Omega_{3}$. Thus, to prove that the linking number is an invariant one needs only to check that the linking number does not change under the three Reidemeister moves 
  $\Omega_{1}$, $\Omega_{2}$ and $\Omega_{3}$. 

\begin{figure}[htb]
  \centering
   {\includegraphics[scale=0.13]{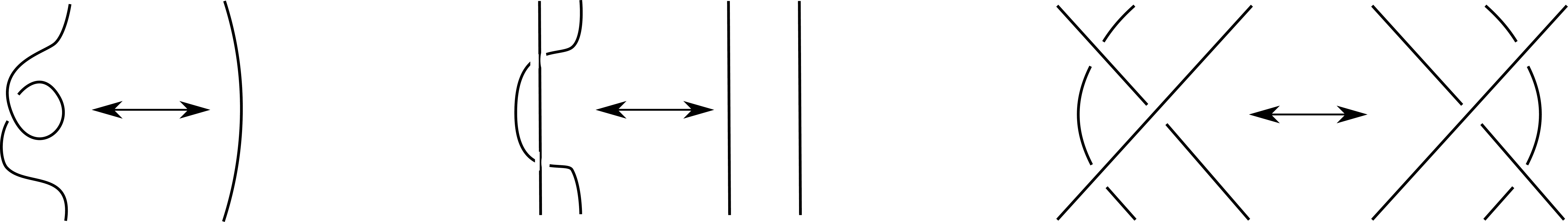}
     \put(-390,-23){$\Omega_1$}
 \put(-250,-23){$\Omega_2$}
 \put(-70,-23){$\Omega_3$}
  \caption{The Reidemeister moves $\Omega_1$, $\Omega_2$ and $\Omega_3$. Reidemeister’s theorem \cite{Reidemeister}, asserts that two diagrams of links represent the same link if and only if they are related through a finite sequence of the three  Reidemeister moves and planar isotopy.}
  \label{Rmoves}}
\end{figure}


For the invariance of the linking number under the first move, one can see immediately that $\Omega_{1}$ only adds or subtracts a self-crossing, and thus it leaves the summation unchanged.  On the other hand, performing the move $\Omega_{2}$ will either introduce or remove two crossings with opposite signs. This new pair of crossings is either two self-intersections of the same knot (which don't appear in the summation), or both occur between the two distinct knots, thus cancelling each other in the summation. Finally, for the invariance of the linking number under the third move $\Omega_{3}$, we note that the set of values being summed remains unchanged. See \href{https://www.youtube.com/watch?v=_qOl_5KcANE}{this linking number movie} \footnote{The linking number movie can be viewed by visiting $https://www.youtube.com/watch?v=\_qOl\_5KcANE$.}.  
Note also that for two knots $K_1$ and $K_2$, the definition implies
that
 $lk(K_1,K_2)=lk(K_2,K_1)$.

In order to state the next theorem we need to define the notion of  connected sum $K_1\# K_2$ of two knots $K_1$ and $K_2$.  This sum  is obtained by removing a single arc from each of the two knots, indicated by dotted lines in Figure \ref{connected sum}. The two augmented knots are then joined by adding arcs in $S^3 \backslash (K_1 \cup K_2)$ as indicated in the figure.  The union of the two new arcs and the two deleted arcs must bound a topological disc that intersects the original knots only along the deleted arcs.

\begin{figure}[h]
  \centering
   {\includegraphics[scale=0.13]{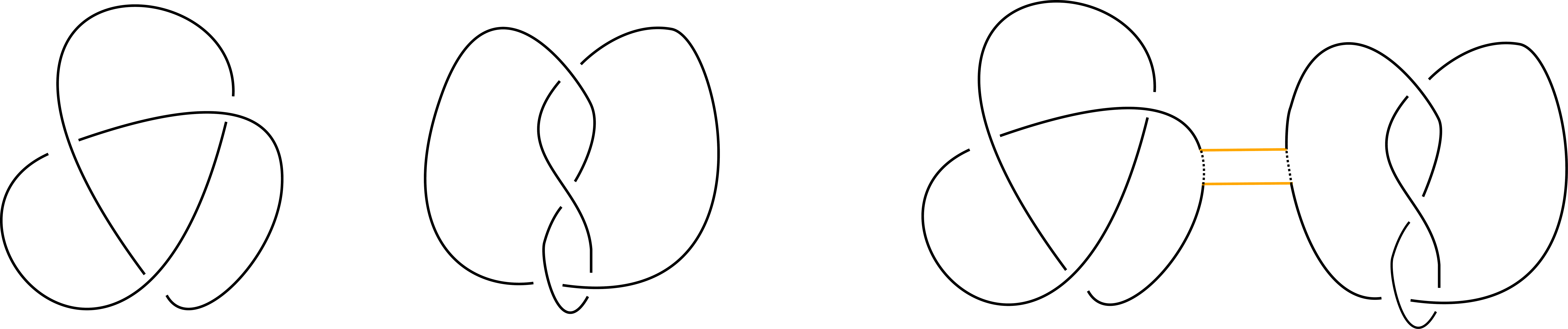}
   \put(-390,-15){$K_1$}
 \put(-290,-15){$K_2$}
 \put(-100,-15){$K_1 \# K_2$}
  \caption{Two knots $K_1$ and $K_2$ and their connected sum of $K_1\#K_2$ . The connected sum of two knots $K_1$ and $K_2$ is obtained by removing two arcs from the two knots $K_1$ and $K_2$. These deleted arcs are indicated in dotted line $K_1 \#K_2$. We then join
the two knots adding arcs as indicated in the figure of $K_1\#K_2$ }
  \label{connected sum}}
\end{figure}

In the following, the knot $K$ with the reversed orientation will be denoted by $-K$.  We have the following
\begin{theorem}
\label{linkingnumber}
Let $K_1$, $K_2$ and $K_3$ be three disjoint oriented knots in $S^3$. Then
\begin{enumerate}
\item
 $lk(K_{1}\# K_{2},K_{3})=lk(K_{1},K_{3})+lk(K_{2},K_{3})$.
\item
 $lk(K_{1},-K_{2})=lk(-K_{1},K_{2})=-lk(K_{1},K_{2})$.
\item
 Suppose that $K_2$ can be obtained from $K_1$ via a single crossing change. Then $lk(K_1,K_3)=lk(K_2,K_3)$. 
\end{enumerate}

\end{theorem}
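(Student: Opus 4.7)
The plan is to prove each part by selecting a convenient diagram and exploiting the invariance of the linking number under ambient isotopy, so that it may be computed from any diagram representing the given link.

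For part (3), I would first note that since $K_1$ is a single component, every self-crossing in a diagram of $K_1$ is excluded from the summation defining $lk(K_1,K_3)$. A single crossing change that converts $K_1$ into $K_2$ takes place at a self-crossing of $K_1$ (no other crossings exist to change on the single component $K_1$), and therefore alters no crossing between $K_1$ and $K_3$ or its sign. Computing both linking numbers from the same underlying projection then immediately gives $lk(K_1,K_3)=lk(K_2,K_3)$.

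For part (2), I would argue directly from the right-hand rule that assigns a sign to each crossing. The sign depends on the pair of strand orientations meeting at the crossing, and reversing either one of the two orientations exchanges positive crossings with negative ones throughout. Hence every term in the sum defining $lk(K_1,-K_2)$ is the negative of the corresponding term for $lk(K_1,K_2)$, and the factor $1/2$ is unaffected. The argument for $lk(-K_1,K_2)$ is identical.

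For part (1), the heart of the theorem, I would use ambient isotopy to position $K_1$ and $K_2$ inside disjoint $3$-balls $B_1,B_2\subset S^3\setminus K_3$. The defining data of the connected sum provides an embedded disc $D$ whose boundary is the union of the two deleted subarcs of $K_1\cup K_2$ with the two new arcs forming $K_1\# K_2$. Because $K_3$ is disjoint from $K_1\cup K_2$, a general position argument lets me isotope $D$ off $K_3$, after which I may choose a projection in which the two connecting arcs of $K_1\# K_2$ contribute no crossings with $K_3$. The crossings of $K_1\# K_2$ with $K_3$ then partition cleanly into those of $K_1$ with $K_3$ and those of $K_2$ with $K_3$, with signs preserved, and summing and halving yields additivity. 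The main obstacle is exactly this geometric step: justifying that the disc $D$ can be pushed off $K_3$ while keeping $K_3$ fixed, so that the connecting arcs can be drawn in the diagram without meeting $K_3$. This is where the topological content of the connected sum (as opposed to mere diagrammatic juxtaposition) is used.
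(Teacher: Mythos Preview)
Your arguments for parts (2) and (3) are correct and essentially identical to the paper's.

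For part (1) there is a genuine error. You propose to ``position $K_1$ and $K_2$ inside disjoint $3$-balls $B_1,B_2\subset S^3\setminus K_3$.'' This is impossible in general: if $K_1\subset B_1\subset S^3\setminus K_3$, then $K_1$ bounds a disc in $S^3\setminus K_3$ (any embedded circle in a $3$-ball does), forcing $lk(K_1,K_3)=0$. So this step fails precisely when the statement has content. Even dropping that step, your next claim --- that having $D$ disjoint from $K_3$ in $S^3$ lets you choose a projection in which the connecting arcs have \emph{no} crossings with $K_3$ --- does not follow; disjoint arcs in space routinely cross in a planar projection. General position controls the dimension of an intersection, not its existence in a projection.

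The paper sidesteps all of this with a one-line diagrammatic observation that makes your ``main obstacle'' disappear: in any diagram realizing the connected sum, the two new connecting arcs are the two edges of a thin band and carry opposite orientations. Hence every crossing of $K_3$ with one connecting arc is paired with a crossing of $K_3$ with the other connecting arc of opposite sign, and these cancel in the linking-number sum. There is no need to push $D$ off $K_3$, nor to find a special projection; the additivity $lk(K_1\#K_2,K_3)=lk(K_1,K_3)+lk(K_2,K_3)$ then follows directly from the definition. Your geometric instinct (that the band can be chosen disjoint from $K_3$) is not wrong, but it is both harder to justify cleanly and unnecessary.
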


\begin{proof}
  
  For item (1): any new crossings that are introduced in the connected sum occur in canceling pairs. Thus additivity follows directly from the definition of the linking number.

  Item (2) follows from the observation that by changing the orientation of only one of the pair, the sign of each crossing between the two knots changes.\\  Item (3) is immediate once we recall that self-crossings are not included in the calculation of linking number. 
\end{proof}

\begin{remark}

Item (3) of the previous theorem indicates that the linking number between two knots $J$ and $K$ is independent of
the knot types of $J$ and $K$. Applying part (3) to certain self-crossings in the knots diagram $J$ and $K$, one obtains eventually two trivial knots $J^{\prime}$ and $K^{\prime}$ that are linked together in such that $lk(K,J)=lk(K^{\prime},J^{\prime})$.  Figure ~\ref{linking2} shows the two illustrative possibilities of $K^{\prime}\cup J^{\prime}$.  

A careful inspection of the difference between the two possibilities shown in Figure \ref{linking2} hints at how we might define a notion of clockwise versus counterclockwise ``twisting'' of one knot about another.  Note that Item (2) of the previous theorem shows that any such definition can only be made relative to a choice of orientation on the knots.  In fact, we'll see that the most natural definition of the framing integer arises from a choice of orientation on a Seifert surface of the knot (although this choice is equivalent to choosing one for the knot itself).


\end{remark}  
\begin{figure}[htb]
  \centering
   {\includegraphics[scale=0.28]{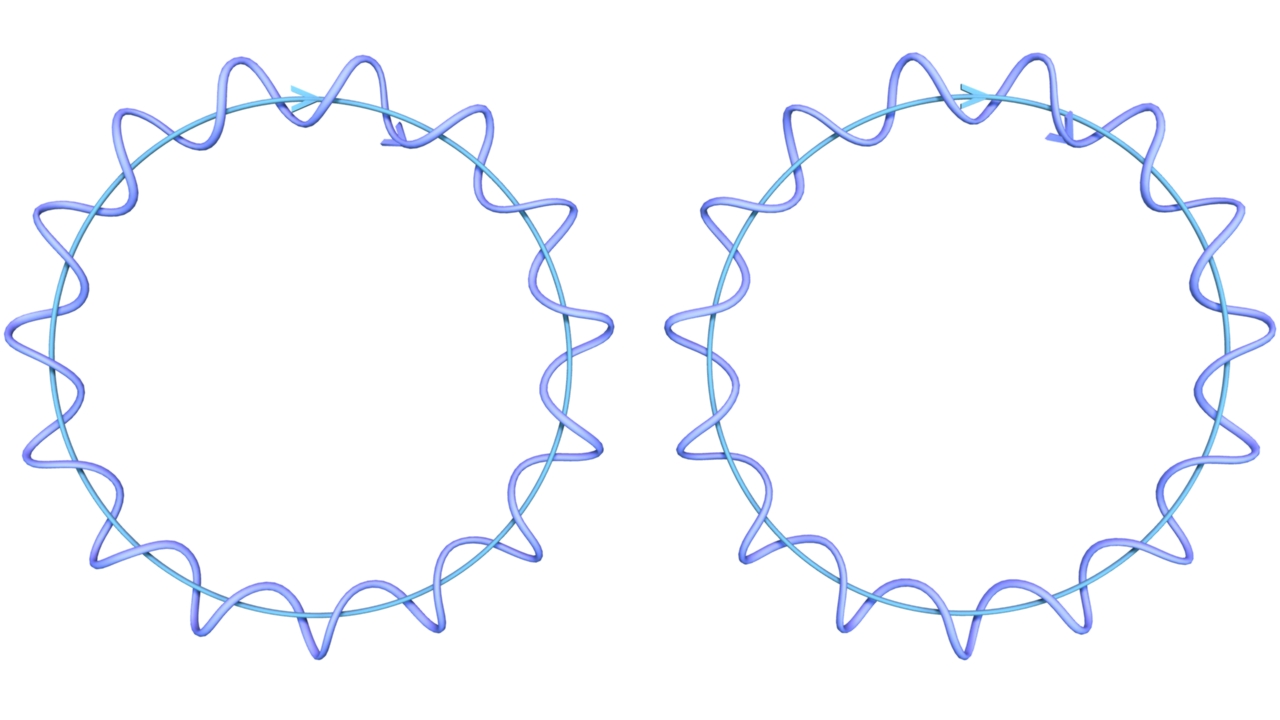}
  \caption{Computing the linking number depends only on the crossings between the knots but not on their knot types.}
  \label{linking2}}
\end{figure}

\subsection{Characterizations of the linking number}
In this section we give various geometric and combinatorial characterizations for the linking number and show the equivalence between them. The definitions of the linking number will be used in later sections.

First, consider a knot $K$ in $S^3$, and then its complement $S^3\backslash K$.  A quick application of Alexander Duality tells us that the first homology group $H_1(S^3\backslash K) \cong \mathbb{Z}$, and is thus generated by a single element $[\eta ]$ (see Figure \ref{se_alg}). 

\begin{figure}[h]
  \centering
   {\includegraphics[scale=0.13]{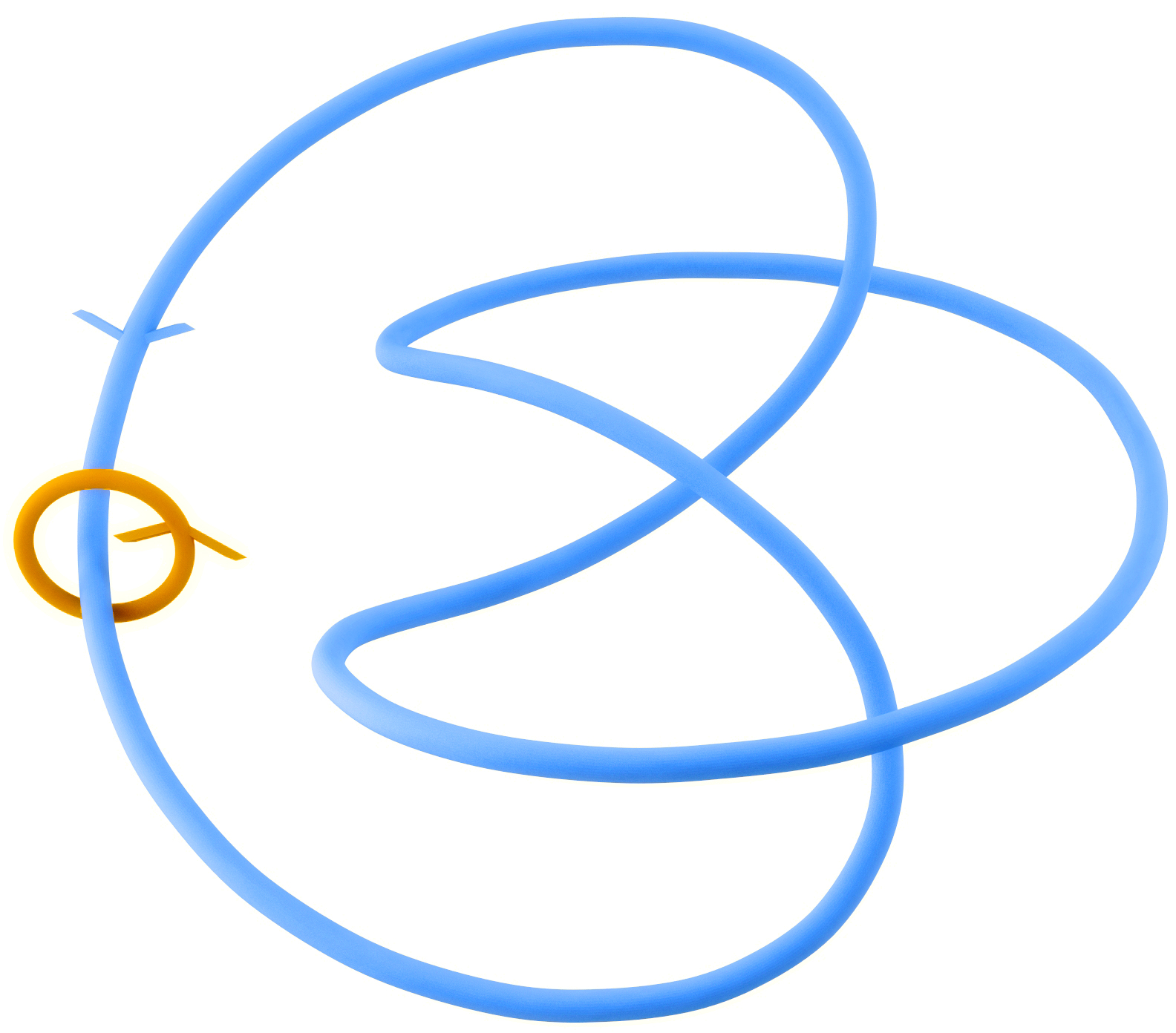}
         \put(-210,90){$\eta$}
 \put(-110,10){$K$}
  \caption{The curve $\eta$ generates the group $H_1(S^3\backslash K)$. As there are two such generators, we usually choose $\eta$ to be the curve with $lk(\eta,K)=1$.}
  \label{se_alg}}
\end{figure}

Let $J$ and $K$ be two disjoint oriented knots in $S^{3}.$ The curve $J$ can be
regarded as a loop in $S^{3}\backslash K$, so it represents an element of the first homology $H_{1}(S^{3}\backslash K)$. This group is generated by the curve $[\eta ]$ (Figure \ref{se_alg}), so write $[J]\in H_{1}(S^{3}\backslash K)$ in terms of the generator $\ [\eta].$ Namely, $[J]_{S^{3}\backslash K}=s[\eta ]$ for some $s\in\mathbb{Z}$. Theorem \ref{chara} below shows that this integer $s$ is equal to $lk(J,K)$ (see Figure \ref{se_alg}).


We now give another characterization of the linking number. A \textit{{Seifert surface}} of a knot is a compact, connected, orientable surface whose boundary is the knot. See Figure ~\ref{seifert} and also this \href{http://www.youtube.com/watch?v=px3Gq_gvvac}{Seifert surface movie}) \footnote{ The Seifert surface movie can be viewed by visiting http://www.youtube.com/watch?v=px3Gq\_gvvac.}.
When the knot $K$ is oriented, we will always assume that the Seifert surface $S$ of $K$ is oriented in a way such that $\partial S=K$.

\begin{figure}[htb]
  \centering
   {\includegraphics[scale=0.2]{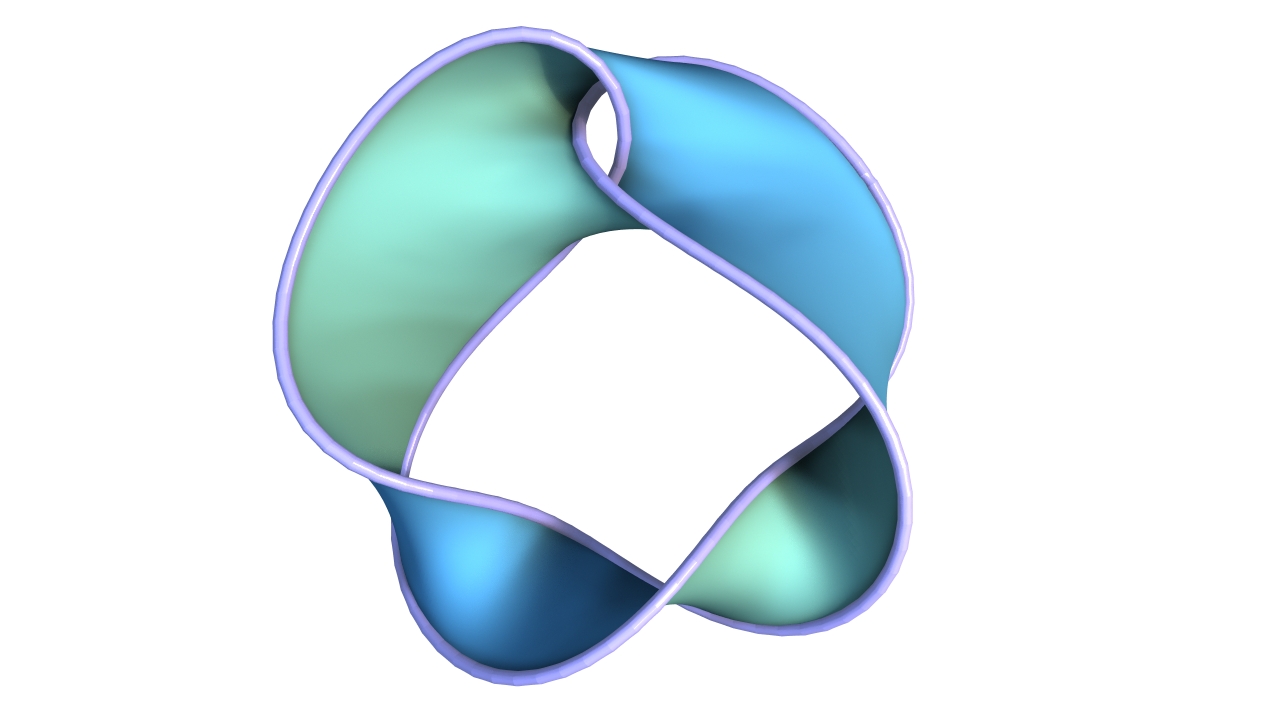}
  \caption{A Seifert Surface for the figure-$8$ knot.}
  \label{seifert}}
\end{figure}

For an orientable surface $S$ with an oriented boundary we need to distinguish between the two sides of $S$. We define the positive side to be the side that its oriented boundary runs counterclockwise as it is seen from it. We denote this side by $S^+$. The side $S^-$ is defined similarly (see Figure \ref{se_surface}). 
\begin{figure}[h]
  \centering
   {\includegraphics[scale=0.1]{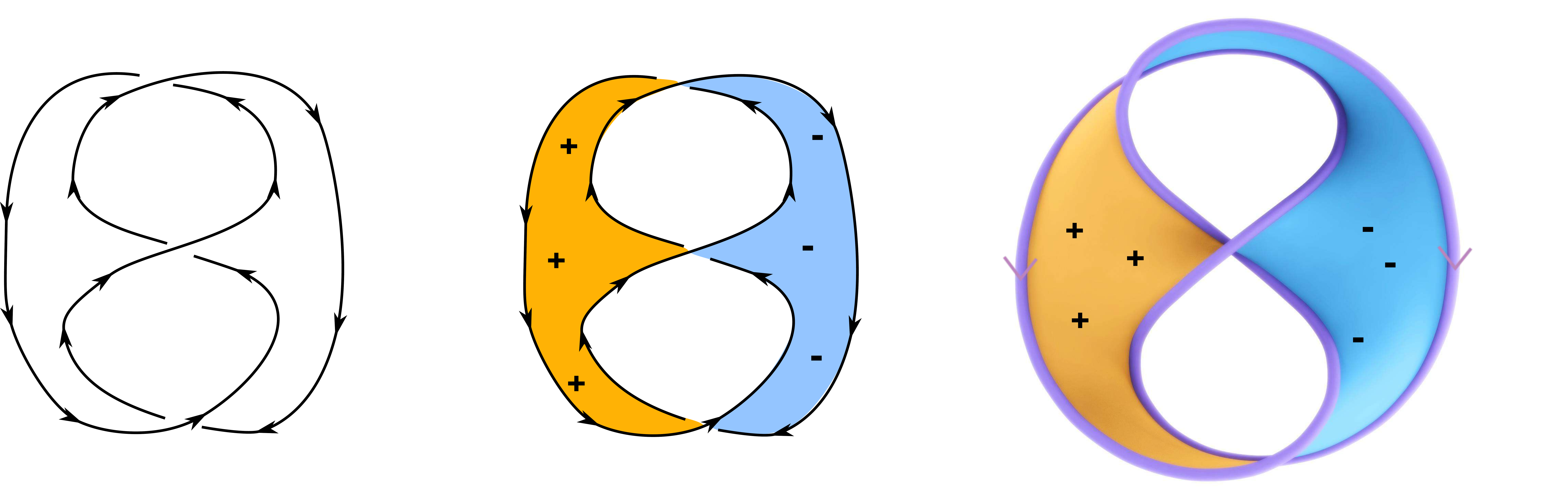}
 \put(-330,-15){$(A)$}
 \put(-220,-15){$(B)$}
  \put(-90,-15){$(C)$}
  \caption{(A): an oriented knot $K$. Figures (B) and (C) shows a Seifert surface of the oriented knot $K$. The Seifert surface  determines two sides $S^+$ and $S^-$.}
  \label{se_surface}}
\end{figure}

\begin{definition}
Let $S$ be a Seifert surface for an oriented knot $K$ in $S^3$. Let $J$ be an oriented knot in $S^3$ that is disjoint from $K$. A \textbf{\textit{positive (resp. negative) intersection}} of $S$ with $J$ is a transverse intersection of $S$ with $J$ such that the oriented curve $J$ passes from $S^-$ to $S^+$ (resp. $S^{+}$ to $S^{-}$). Assign weights $+1$ and $-1$ respectively to the positive intersections and negative intersections of $S$ and $J$.  The \textit{\textbf{intersection number}} of $S$ and $J$, denoted $S \cdot J$, is the sum of the weights of all transverse intersections. The following theorem will prove that the $S \cdot J$ is equal to $lk(J,K)$. 


\end{definition}
\begin{theorem}
\label{chara}
Let $J$ and $K$ be disjoint oriented knots in $S^3$. Let $S$ and $S^{\prime}$ be a Seifert surfaces that bounds $K$ and $J$ respectively. Then
\begin{itemize}
\item[(1)] Suppose that $[\eta]$ generates $H_{1}(S^{3}\backslash K)\cong\mathbb{Z}$, where $[\eta]$ is represented by a curve $\eta$ such that $lk(K,\eta)=1$.  Then if   $[J]=s[\eta]$ for some $s\in \mathbb{Z}$, we have that  $lk(J,K)=s$.
\item[(2)] $lk(J,K)=J \cdot S= K \cdot S^{\prime}$
\end{itemize}
\end{theorem}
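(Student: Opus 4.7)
The plan is to establish both items by working through the intersection number $J \cdot S$ as a common intermediary, proving the chain of equalities $lk(J,K) = J \cdot S = s$. Item (1) then falls out as a by-product of item (2), and the symmetry $J \cdot S = K \cdot S'$ is obtained by interchanging the roles of $J$ and $K$ in the same argument. I would therefore tackle the identifications $J \cdot S = s$ and $lk(J,K) = J \cdot S$ in that order.

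For $J \cdot S = s$, the key observation is that $J \cdot S$ depends only on the homology class $[J] \in H_1(S^3 \setminus K)$. If $J$ and $J'$ are cycles that cobound a singular $2$-chain $\Sigma$ in $S^3 \setminus K$, a standard general-position argument puts $\Sigma$ transverse to $S$, so $\Sigma \cap S$ is a compact $1$-chain whose oriented boundary realizes $J \cdot S - J' \cdot S$; consequently the two intersection numbers coincide. A direct local computation then shows that the meridian $\eta$ meets any Seifert surface of $K$ transversally in a single point, and the orientation convention $\partial S = K$ together with the normalization $lk(\eta, K) = 1$ forces the sign to be $+1$, so $\eta \cdot S = 1$. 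Linearity in homology yields $J \cdot S = s$ whenever $[J] = s[\eta]$.

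For $lk(J, K) = J \cdot S$, I would show that $lk(\cdot, K)$ also descends to a homomorphism $H_1(S^3 \setminus K) \to \mathbb{Z}$ and agrees with $J \cdot S$ on the generator $[\eta]$. The homomorphism property combines three ingredients: Theorem \ref{linkingnumber}(1) gives additivity under connected sum, and connected sums along arcs disjoint from $K$ correspond to addition in $H_1(S^3 \setminus K)$; Theorem \ref{linkingnumber}(3) gives invariance under self-crossing changes of $J$, which preserve $[J]$ since the change is realized by a small null-homologous band in $S^3 \setminus K$; and Reidemeister invariance handles ambient isotopies. Since both homomorphisms send $[\eta]$ to $1$, they agree on the full cyclic group $H_1(S^3 \setminus K) \cong \mathbb{Z}$. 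Item (1) then follows from item (2), and the remaining equality $J \cdot S = K \cdot S'$ in item (2) comes from applying the same chain of equalities with $J$ and $K$ interchanged.

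The main obstacle in this plan is establishing rigorously that $lk(\cdot, K)$ factors through $H_1(S^3 \setminus K)$, which amounts to showing that any two homologous cycles in the knot complement are connected by a sequence of isotopies, connected-sum modifications with null-homologous loops, and self-crossing changes. This is a standard but nontrivial piecewise-linear or smooth general-position argument in which one resolves the cobounding $2$-chain into a finite composition of elementary moves. A direct alternative, which avoids this reduction altogether, is to prove $lk(J,K) = J \cdot S$ by a diagrammatic case analysis using a Seifert surface produced by Seifert's algorithm and matching the sign of each crossing between $J$ and $K$ to the sign of the corresponding transverse intersection of $J$ with $S$; however, this route demands careful bookkeeping of orientation conventions at each local crossing configuration.
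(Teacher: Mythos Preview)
Your proposal is correct, but the logical order and the concrete mechanism differ from the paper's argument. The paper proves item~(1) \emph{first}, by a hands-on connected-sum trick: each undercrossing of $J$ beneath $K$ is removed by replacing $J$ with $J\#(\pm\eta)$, and once all undercrossings are gone the modified curve $J\#(-s\eta)$ is separable from $K$, so $lk(K,J\#(-s\eta))=0$; additivity (Theorem~\ref{linkingnumber}(1)) then gives $lk(K,J)=s$. Item~(2) is deduced from item~(1) by a parallel connected-sum argument on the Seifert-surface side: each $\#(\pm\eta)$ cancels one signed intersection of $K$ with $S'$, and after $n=K\cdot S'$ such moves the modified $J$ bounds a surface missing $K$, forcing $[J]=n[\eta]$ and hence $n=s$.

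You instead go through $J\cdot S$ as the pivot, establishing homology-invariance of the intersection number abstractly (via transversality of a bounding $2$-chain), then arguing that $lk(\,\cdot\,,K)$ must also factor through $H_1(S^3\setminus K)$ and agree on the generator. The conceptual gain is that intersection number is \emph{manifestly} a homology invariant, so half of your chain comes for free; the cost is exactly the obstacle you flag, namely showing that the diagrammatic linking number descends to $H_1(S^3\setminus K)$. The paper's connected-sum manoeuvre is precisely a concrete proof of that descent---it exhibits, for any $J$ with $[J]=s[\eta]$, an explicit sequence of $\#(\pm\eta)$ moves reducing $J$ to a curve unlinked from $K$---so in effect the paper resolves your ``main obstacle'' constructively rather than by an abstract general-position argument on a singular $2$-chain. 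Either route is sound; yours is more structural, the paper's more elementary and self-contained given only Theorem~\ref{linkingnumber}.
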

\begin{proof}
\begin{enumerate}
\item Suppose that $[J]=s[\eta]$ for $s\in \mathbb{Z}$. Turn each positive crossing of $J$ under $K$ into an overcrossing by replacing $J$ with the connected sum $J \# (-\eta)$, remembering that $-\eta$ denotes the curve $\eta$ with its orientation reversed. Turn each negative crossing of $J$ under $K$ into an overcrossing by replacing $J$ with the connected sum $J \# \eta$. Doing this for all undercrossings of $J$ with $K$ gives us two knots $K$ and $J \# (-s\eta)$ that can be separated by a 2-sphere.  As such, we can manipulate them via ambient isotopy in such a way that they share no crossings in a planar diagram, demonstrating that $lk(K,J \# (-s\eta))=0$. Hence $lk(K,J \# (-s\eta))=lk(K,J)-s \; lk(K,\eta)=lk(K,J)-s=0$ which yields the result. See Figure \ref{cancel1}.

\begin{figure}[h]
  \centering
   {\includegraphics[scale=0.2]{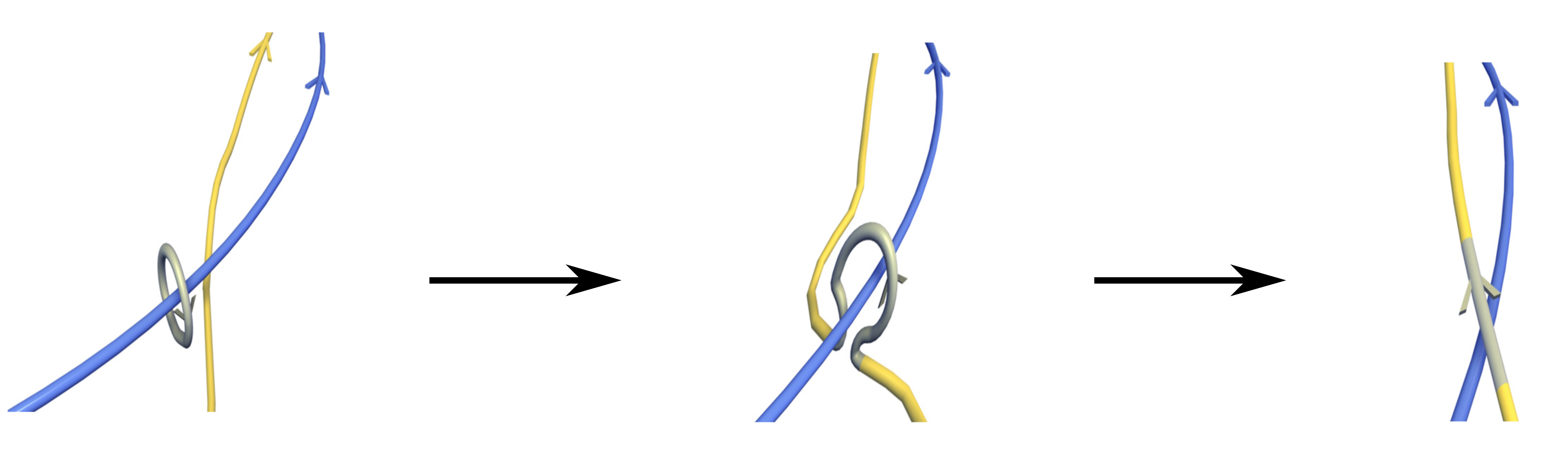}
      \put(-350,-20){$(A)$}
    \put(-190,-20){$(B)$}
     \put(-20,-20){$(C)$}
  \caption{Unlinking two knots $K$ and $J$ locally at a crossing corresponds to
taking the connected sum of $J$ and the curve $\pm[\eta]$ where $[\eta]$ is a curve with $lk(K,\eta) =1$.}
  \label{cancel1}}
\end{figure}


\item 
It is sufficient to show $lk(K,J)=K\cdot S^{\prime}$. The equality $lk(K,J)=J\cdot S$ follows by the symmetry of the linking number (Theorem \ref{linkingnumber} part (3)).
Consider the curve $J$ as an element in $H_{1}(S^{3}\backslash K)$ which is generated by $[\eta ]$. Write $[J]_{S^{3}\backslash K}=s[\eta ]$ for some $s\in \mathbb{Z}$. We know by part (1) that $s= lk(J,K)$. The result follows if we show that $s=K. S^\prime$. Now let set $n=K. S^\prime$. 
Suppose $K$ intersects $S^\prime$ positively at a point $p$ as indicated in the Figure \ref{cancel} (A). Inspect the connected sum $J\# -\eta$ and notice that it cancels the intersection point between $S^\prime$ and $K$ around $p$. Similarly, when $K$ intersects $S^{\prime}$ negatively, the connected sum $J\# +\eta$ cancels one intersection between the surface $S^{\prime}$ and $K$. Now, let $-n \eta$ be the curve obtained from $n$ copies of $-\eta$ when $n>0$ or $-n$ copies of $\eta$ when $n<0$.

By our earlier observation, the homology element $[J\#-n\eta]_{S^{3}\backslash K}$ bounds a surface that does not intersects the knot $K$. Hence homology element $[J\#-n\eta]_{S^{3}\backslash K}=0$ or $[J]=n[\eta]$. The result follows.


\begin{figure}[h]
  \centering
   {\includegraphics[scale=0.2]{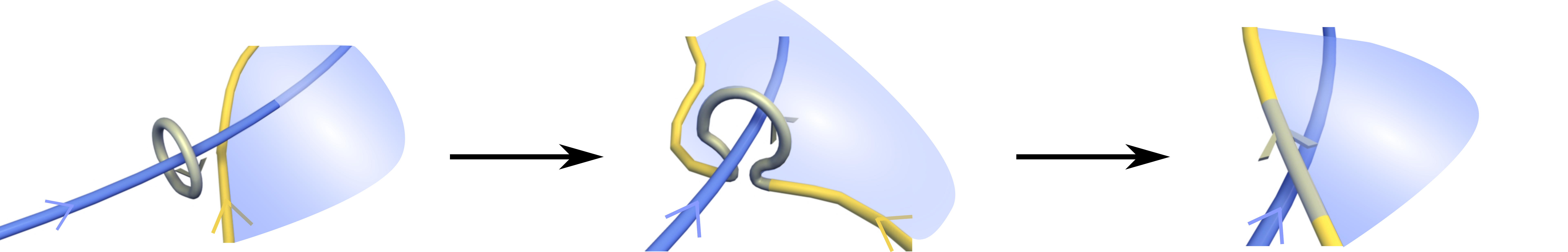}
      \put(-400,-20){$(A)$}
    \put(-240,-20){$(B)$}
     \put(-80,-20){$(C)$}
  \caption{The blue curve represents the knot $K$, the yellow curve represents the knot $J$, and the grey curve represents the curve $\eta$. The surface $S^\prime$ is the surface colored in blue and it is the Seifert surface of $J$. This figure shows that reducing the number of intersections between $S^\prime$ and $K$ by $1$ corresponds to
taking the connected sum of $K$ and the curve $\pm[\eta]$.}
  \label{cancel}}
\end{figure} 

\end{enumerate}       
\end{proof}



If we orient the knot $K$ then for a framed knot $(K,V)$ we can define explicitly what we mean by the framing integer $n$ that describes the number of times the vector field twists around $K$, as follows:
\begin{definition}
\label{pushoff}
Let $(K,V)$ be a framed knot. The self-linking number is given by $lk(K,K^{\prime})$, where $K^{\prime}$ is an oriented knot formed by a small shift of $K$ in the direction of the framing vector field and oriented parallel to the knot $K$ (see Figure \ref{selflinking}).
\end{definition}

\begin{figure}[htb]
  \centering
   {\includegraphics[scale=0.2]{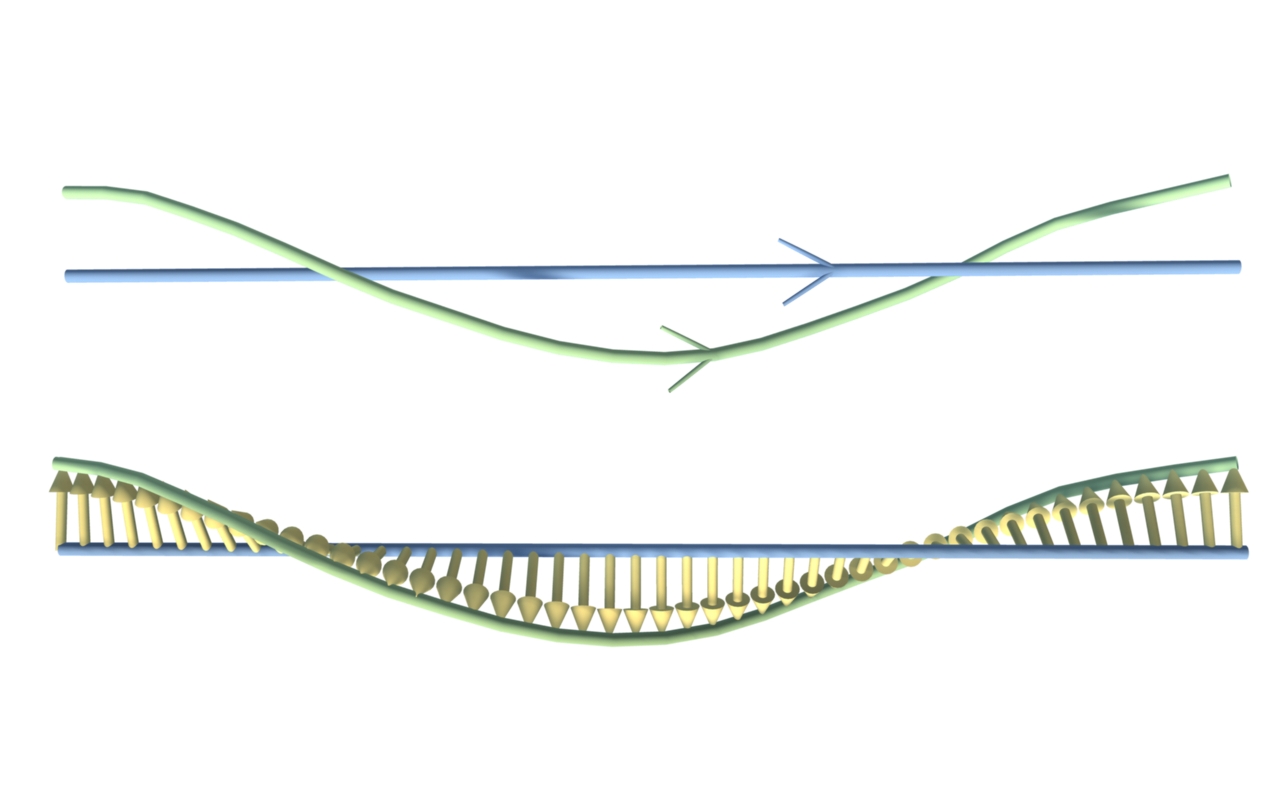}
  \caption{A pushoff of a knot by a framing.}
  \label{selflinking}}
\end{figure}

Note that the self-linking number of a framed knot is independent of the
orientation we choose for $K$, since at every crossing of $-K$ the orientation of both arcs is reversed, leaving the sign unchanged.  Note also that the self-linking number is the same if $K$ is shifted in the direction opposite to the framing.

The reason we have introduced this concept is that the self-linking number of a framed knot $(K,V)$ is equal to the framing integer that determines, or is determined by $(K,V)$. This is evident by observing Figure \ref{selflinking} and noticing that locally, the vector field winds $\pm 1$ around the knot if and only if the pushoff $K^{\prime}$ contributes $\pm1$ to the self-linking number.  Note that the definition of a framed knot $(K,V)$ is independent of
the choice of orientation of the knot $K$. On the other hand we have just shown that the self-linking number is independent of orientation we choose for the knot $K$ so defining this number to be the framing integer matches with our original definition of the framing. Hence we will assume in what follows that these two concepts, the self-linking number and the framing integer, are the same and we will use both terms interchangeably.
The framing with self-linking number $n$ will be called
the \textbf{\textit{$n$-framing}} and a knot with the $n$-framing will be referred to as \textbf{\textit{$n$%
-framed}}. Hence we can define a framed knot in $S^{3}$ to be $(K,n)$ where $K$
is a knot in $S^{3}$ and $n$ is an integer.  It will be useful in practice to have a standard way to choose a framing, given a knot diagram.

\begin{definition}
The \textbf{\textit{blackboard framing}},
defined for a plane knot projection, is given by a nonzero vector field that is everywhere
parallel to the projection plane. See Figure \ref{framedtrefoil}.
\end{definition}

\begin{figure}[htb]
  \centering
   {\includegraphics[scale=0.2]{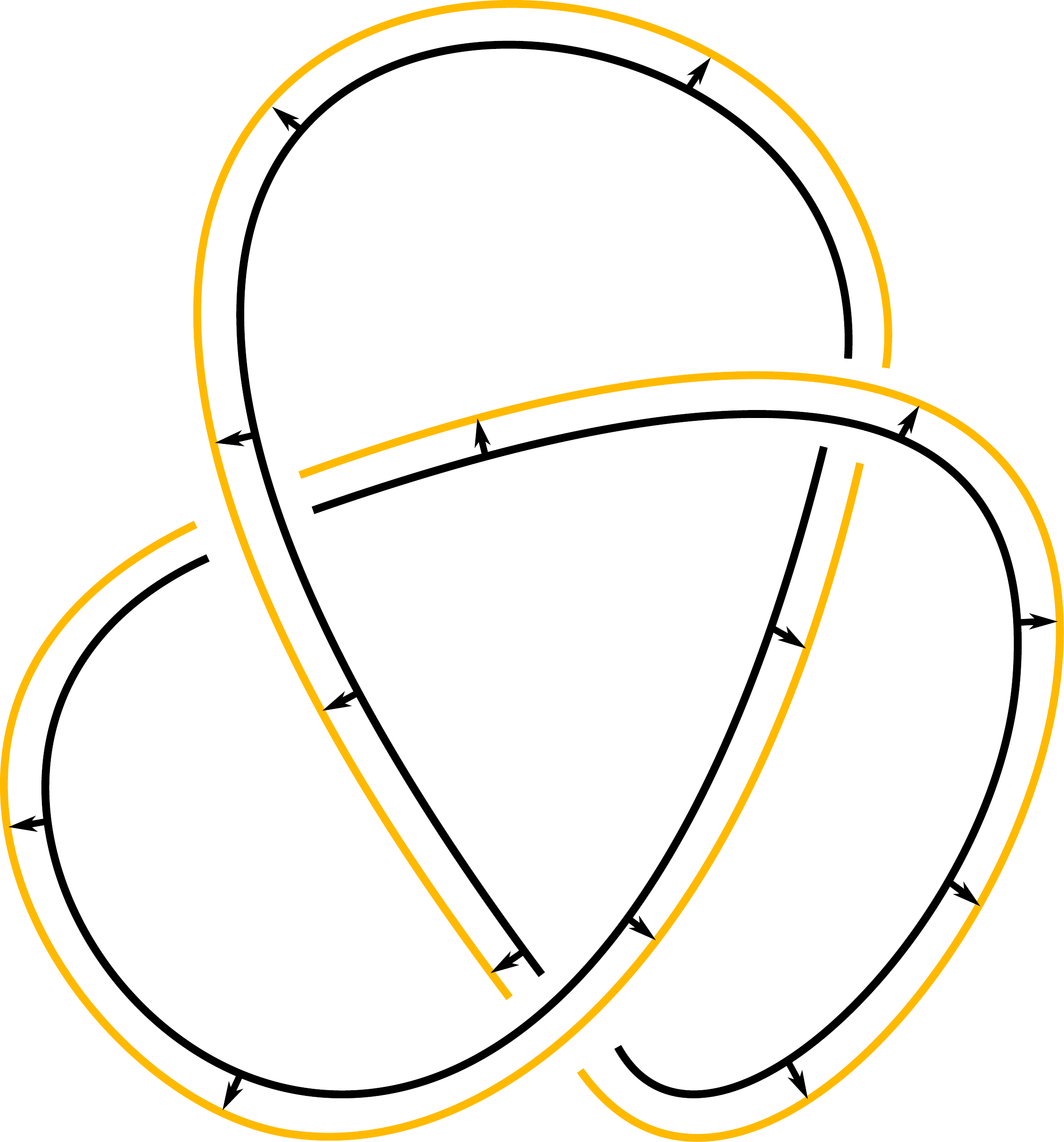}
  \caption{ A trefoil diagram with its blackboard framing.}
  \label{framedtrefoil}}
\end{figure}

The reason for calling this the blackboard framing is clear once we attempt to draw it: we simply choose a point on the knot, move transversely to the knot on one side (it doesn't matter which!) and then follow the knot, staying on the same side of the arcs until the chalk returns to the original pushoff.  In this way, if we visualize the framed knot as a ribbon, it will lie flat on the blackboard.

The blackboard framing is also related to the notion of \textit{writhe} of a knot. 

\begin{definition}
The \textbf{\textit{writhe}} of a knot diagram is the sum of the signs of every crossing in the diagram.
\end{definition}
Notice that since both possible choices of orientations give the same sign at each crossing then the writhe does not depend on the orientation of the knot.  Note also that the writhe is invariant under $\Omega_{2}$ and $\Omega_{3}$ but not invariant under the move $\Omega_{1}$. The notions of the writhe of knot diagram and the self-linking of a framed knot given by a diagram with a blackboard framing are related as we will show shortly.

The blackboard framing for a knot diagram $D$ of $K$ corresponds to one particular framing $%
n_{0}$ of $K$.  This leads to a natural question: can we obtain a ``framed knot diagram"
corresponding to each of the possible framings for $K$? In other words, can
we always represent a framed knot $(K,n)$ by a knot diagram with the blackboard framing? The answer is \emph{yes}.
In order to see this we need to see the effect of
the Reidemeister moves $\Omega _{1}$, $\Omega _{2}$, and $\Omega _{3}$ on
the blackboard framing of a fixed knot diagram $K$.  Notably, only $\Omega_{1}$ changes the blackboard framing, by exactly $\pm 1$.  By applying an appropriate number of the moves $\Omega_{1}$ we can thus find a diagram of the knot with the desired framing being the blackboard framing.

What we have also discovered is that for framed knots (with blackboard framed diagrams) the Reidemeister theorem does not hold immediately because the move $\Omega_{1}$ changes the
blackboard framing. Luckily there is an analogous theorem, which will follow directly once we prove the following proposition.

\begin{proposition}
The self-linking number of a framed knot given by a diagram with
blackboard framing is equal to the writhe of the diagram.
\end{proposition}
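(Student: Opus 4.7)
The plan is to compute $lk(K,K')$ directly from the diagram, where $K'$ is the pushoff of $K$ in the direction of the blackboard framing, and compare the resulting sum of crossing signs with the writhe $w(D)$.

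First I would set up the geometric picture. Since the blackboard framing vector field is everywhere parallel to the projection plane, pushing $K$ a small distance along $V$ produces a knot $K'$ whose diagram is obtained from $D$ by drawing a parallel copy of each arc, offset in the plane by a uniformly small amount. I would choose this offset small enough that the projection of $K \cup K'$ is a generic two-component link diagram in which every crossing between $K$ and $K'$ lies in a small neighborhood of some self-crossing of $D$. Away from the self-crossings of $D$, the strands $K$ and $K'$ are locally two nearly-coincident parallel arcs that share no crossings with each other.

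Next I would do the local analysis at a single self-crossing $p$ of $D$, say where strand $\alpha$ crosses over strand $\beta$ with sign $\epsilon(p) \in \{+1,-1\}$. In a neighborhood of $p$ we now have four strands: $\alpha, \alpha'$ (both lying at the same height, above) and $\beta, \beta'$ (both below). Self-crossings of $K$ near $p$ (the $\alpha$–$\alpha'$ and $\beta$–$\beta'$ crossings, if any appear) are excluded from $lk(K,K')$, so they can be ignored. The only crossings near $p$ contributing to $lk(K,K')$ are the two $K$–$K'$ crossings $\alpha$-over-$\beta'$ and $\alpha'$-over-$\beta$. A quick check of the right-hand rule, using the fact that $\alpha'$ inherits the orientation of $\alpha$ and $\beta'$ inherits the orientation of $\beta$, shows that each of these two crossings carries the same sign $\epsilon(p)$ as the original crossing at $p$.

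Summing over all self-crossings of $D$ and invoking Definition \ref{lk}, I then get
\begin{equation*}
lk(K,K') \;=\; \tfrac{1}{2}\sum_{p \in \text{crossings of }D} 2\,\epsilon(p) \;=\; \sum_{p \in \text{crossings of }D} \epsilon(p) \;=\; w(D),
\end{equation*}
which is exactly the claim. I expect the only subtlety is the local crossing-sign computation at each $p$: one must verify carefully that the two induced $K$–$K'$ crossings always share the sign of the parent crossing regardless of whether $\epsilon(p) = +1$ or $-1$ and regardless of on which side of $\beta$ the pushoff $\beta'$ lies, since the blackboard framing does not prescribe a preferred side. This is handled by observing that the self-linking number is independent of which side we push off to (as noted in the text just after Definition \ref{pushoff}), so it suffices to check one representative picture for each sign of $\epsilon(p)$, which is a short diagrammatic verification.
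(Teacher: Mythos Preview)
Your proposal is correct and follows essentially the same approach as the paper: both argue that for the blackboard framing the only $K$--$K'$ crossings lie near the self-crossings of $D$, and that each self-crossing of sign $\epsilon(p)$ contributes exactly two $K$--$K'$ crossings of that same sign, so the factor $\tfrac12$ in Definition~\ref{lk} recovers $w(D)$. One small labeling slip: in your parenthetical, $\alpha$--$\alpha'$ and $\beta$--$\beta'$ would be $K$--$K'$ crossings rather than self-crossings of $K$, but since parallel strands at the same height do not cross near $p$ these do not appear anyway, and the argument is unaffected.
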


\begin{proof}
In the case of blackboard framing, the only crossings of $K$ with its pushoff $K^{\prime
}$ occur near the crossing points of $K$. The neighborhood of each crossing
point looks like

\begin{figure}[htb]
  \centering
   {\includegraphics[scale=0.85]{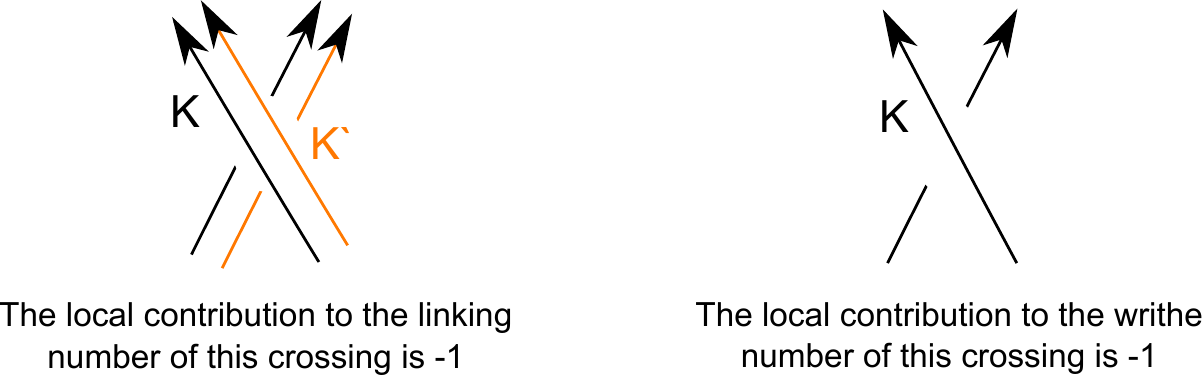}
  \caption{The writhe of a diagram is the same as the self-linking number.}
  \label{the writhe and selflinking}}
\end{figure}

There are two crossings of $K$ with $K^{\prime}$, each with the same sign as the crossing of $K$. The claim follows directly from the
definition for the linking number in $S^{3}$, and we now see some of the motivation for defining the writhe to be the total sum, whilst the linking number is one half of the sum of the crossings.
\end{proof}
Now we give the figure of modified Reidemeister move 1, which we will use in the next theorem.
\begin{figure}[htb]
  \centering
   {\includegraphics[scale=0.38]{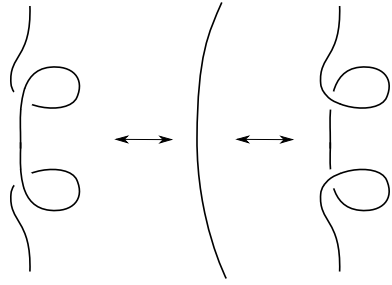}
  \caption{Modified Reidemeister 1 move $F\Omega _{1}$.}
  \label{modified r1 move}}
\end{figure}

\begin{theorem}
Two knot diagrams with blackboard framing $D_{1}$ and $D_{2}$ represent equivalent framed knots
if and only if $D_{1}$ can be transformed into $D_{2}$ by a sequence of
plane isotopies and local moves of the three types $F\Omega _{1}$, $\Omega _{2}$%
, and $\Omega _{3}$, where $F\Omega _{1}$ is given by the figure~\ref{modified r1 move} and $\Omega _{2}$ and $\Omega _{3}$ are the usual Reidemeister moves.
\end{theorem}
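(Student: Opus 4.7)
The ``if'' direction is bookkeeping: by the preceding proposition the framing integer of a blackboard-framed diagram equals the writhe, and each of $\Omega_2, \Omega_3$ leaves the writhe unchanged while $F\Omega_1$ introduces (or removes) one positive and one negative kink, contributing $+1-1=0$ to the writhe. Since each move also preserves the underlying knot type (being either a classical Reidemeister move or built out of them), the framed knot $(K,n)$ is unchanged under all three operations.

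For the converse, assume $D_1$ and $D_2$ represent the same framed knot. Their underlying unframed knots are equivalent, so by the classical Reidemeister theorem there is a finite sequence $\Sigma$ of planar isotopies and moves $\Omega_1, \Omega_2, \Omega_3$ taking $D_1$ to $D_2$. Equality of framings together with the preceding proposition yields $w(D_1)=w(D_2)$, and since $\Omega_2$ and $\Omega_3$ preserve writhe while each $\Omega_1$ shifts it by $\pm 1$, the number of positive $\Omega_1$-moves appearing in $\Sigma$ must equal the number of negative ones; call this common count $k$.

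The heart of the argument is to convert $\Sigma$ into a sequence using only $F\Omega_1, \Omega_2, \Omega_3$, proceeding by induction on $k$. Pair a positive $\Omega_1$ occurring at step $i$ with some negative $\Omega_1$ at a later step $j$. Replace the move at step $i$ by an $F\Omega_1$, which creates both the desired positive kink and an adjacent extra negative kink; then, between steps $i$ and $j$, transport this extra negative kink along its strand using only $\Omega_2$ and $\Omega_3$ until it reaches the location where step $j$ was to produce its kink, and delete step $j$. The resulting sequence contains two fewer $\Omega_1$-moves than $\Sigma$, and the inductive hypothesis closes the argument. The main obstacle is the transport step: one must verify, by a short local case check, that a solitary kink can always be slid along its strand past any neighboring over- or under-crossing by means of a single $\Omega_2$ together with a single $\Omega_3$. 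Once this sliding lemma is in place, the inductive reduction goes through cleanly and the theorem follows.
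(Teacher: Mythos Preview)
Your proof is correct and follows essentially the same strategy as the paper's: both directions rest on the proposition that the blackboard framing integer equals the writhe, and the harder direction is handled by sliding kinks along the strand via $\Omega_2$ and $\Omega_3$ so that opposite-sign $\Omega_1$-moves can be paired into $F\Omega_1$-moves. Your inductive packaging and explicit isolation of the sliding lemma are a bit more detailed than the paper's sketch, but the substance is the same.
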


\begin{proof}
Suppose first that the diagrams represent equivalent framed knots.
The associated knots $K_{1}$ and $K_{2}$ are isotopic, and thus the 
\emph{standard} Reidemeister theorem tells us that the diagrams are related by a sequence of plane isotopies and the moves $\Omega _{1}$, $\Omega _{2}$%
, and $\Omega _{3}$.  Note that by the above proposition, $D_{1}$ and $D_{2}$ both have the same writhe.  We know that writhe is invariant under plane isotopies and the moves $\Omega _{2}$ and $\Omega _{3}$, and moreover that every move $\Omega_{1}$ changes the writhe by exactly $\pm 1$, with the sign depending on the direction of the kink.  Thus, there must be an even number of right-kinks and left-kinks in the sequence of moves connecting $D_{1}$ to $D_{2}$.  By a sequence of plane isotopies, $\Omega_{2}$ and $\Omega_{3}$ moves any kink can be moved anywhere along the knot.  We can then pair them so that we get a set of moves of the form $F\Omega _{1}$, and this direction of the statement is proved. For the other direction, we need simply to note that the modified move $F\Omega_{1}$ doesn't change the writhe of a diagram, and is a combination of traditional Reidemeister moves.  Hence two diagrams being related by a sequence of these moves means that the corresponding knots are isotopic, and they have the same framing.
\end{proof}

The previous results can be summarized in the following statement.  For every framed knot $(K,n)$ we can find a plane knot diagram that
represents that framed knot. This plane diagram is unique up to modified Reidemeister moves and plane isotopy.

In the next section we introduce two important curves that are naturally related to a framed knot in $S^3$. 
\section{The longitude and the meridian}
In this section provide various characterization of two important curves that are related to the framing of a knot. These curves provide another homological characterization of the framing of a knot. Furthermore we relate these curves to the self-linking number we introduced earlier. This definition of the framing plays an essential role when one defines a surgery on $3$-manifold. 
 
Before we introduce these curves and their relationship to the framing of knot we need to discuss the homology and the homotopy groups of the torus.

\subsection{Curves on the torus}
In this subsection we give a discussion of closed curves on the torus up to three equivalence relations: homology, homotopy and ambient isotopy.\\
 Let $S$ be an arbitrary surface. Let $f_{i}:[0,1]\longrightarrow S$ for $i=1,2$ be two loops ($f_i(0)=f_i(1)$) on the surface. It is easy to prove the following facts:

\begin{enumerate}

\item If $f_1$ is homotopic to $f_2$ then $f_1$ is homologous to $f_2$.
\item Suppose that $f_1$ and $f_2$ are embeddings. If $f_1$ is ambient isotopic to $f_2$ then $f_1$ is  homotopic to $f_2$ in $S$.
\end{enumerate}
For a generic surface $S$ the inverse of the statements (1) and (2) is not true in general. On the torus however, the inverse directions hold in special cases. We discuss this in the following. 
\subsubsection{Homology and homotopy of the torus}

We give a quick discussion on the first homology and homotopy groups of the tours. See the first chapter of \cite{Rolfsen} for more details.

The fundamental group of the torus is $\pi _{1}(T^{2})=\pi _{1}(S^{1}\times S^{1})\simeq \mathbb{Z}\bigoplus \mathbb{Z}$. In what follows we will define a particular isomorphism between the fundamental group of $T^2$ and $\mathbb{Z}\bigoplus \mathbb{Z}$. Hence, specify coordinates for torus $T^2$ by  $T^{2}=S^{1}\times S^{1},$ where we identify $S^{1}$ with  the unit
complex numbers. Then any point on $T^{2}$ has coordinates $(e^{2 \pi i\theta
},e^{2 \pi i\phi })$ where $0\leq \theta, \phi\leq 1$. Furthermore, choose the counterclockwise orientation on $S^1$ and in this way any map $f$:$S^{1}\rightarrow T^{2}$ may be
regarded as an element of $\pi _{1}(T^{2})$. In particular, consider the maps $l:S^{1} \rightarrow T^{2}$ and $m:S^{1} \rightarrow T^{2}$ given by
\begin{equation*}
m(e^{2 \pi i\theta})=(1,e^{2 \pi i\theta})
\end{equation*}
\begin{equation*}
l(e^{2 \pi i\theta})=(e^{2 \pi i\theta}, 1)
\end{equation*}

\noindent where $0\leq \theta \leq 1.$
These two maps represent the two generators
of $\pi _{1}(T^{2})$. See Figure \ref{merdian_long}.

\begin{figure}[h]
  \centering
   {\includegraphics[scale=0.12]{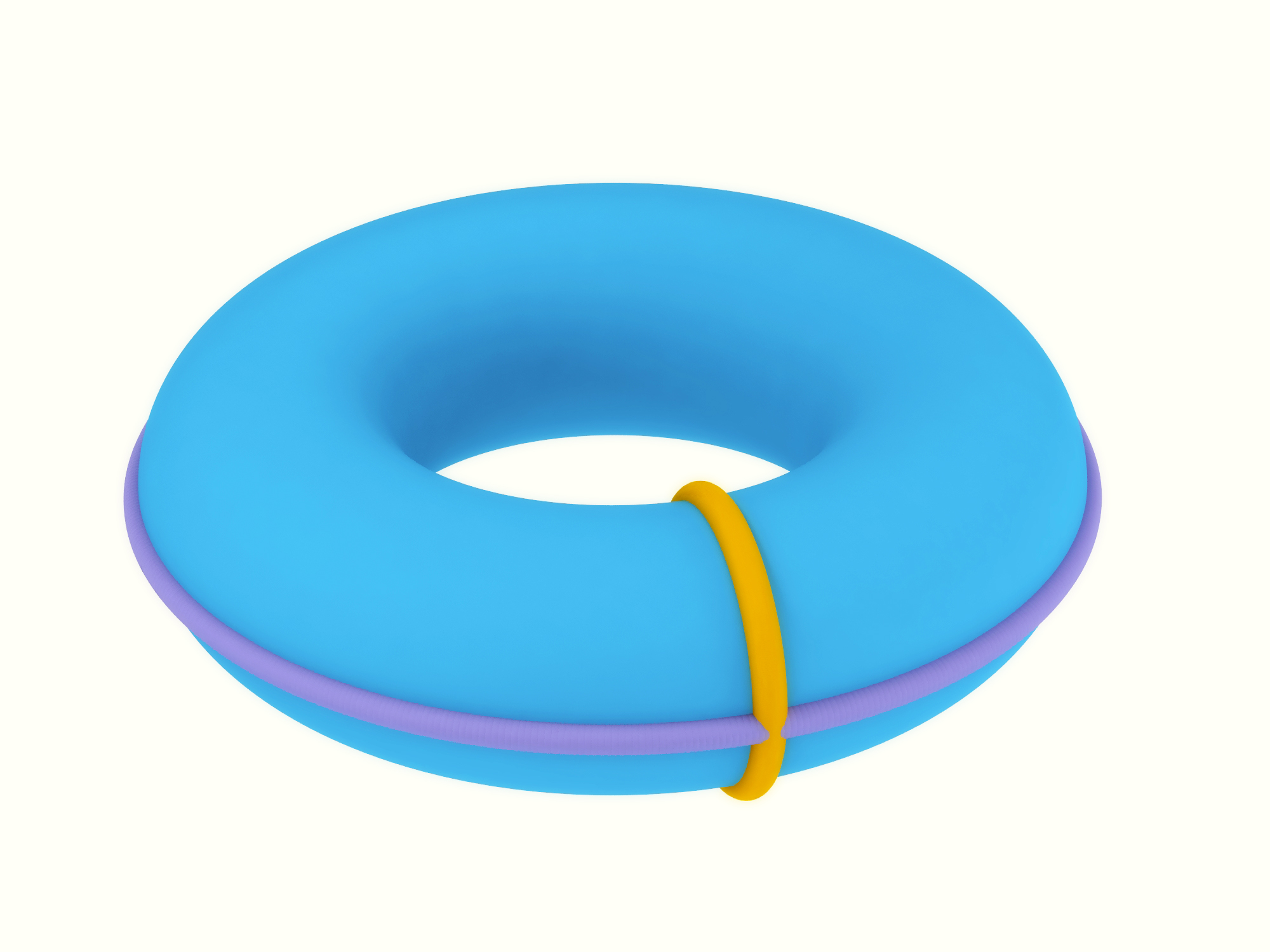}
 \put(-114,70){$m$}
  \put(-35,67){$l$}
  \caption{The maps $m$ and $l$ on the standardly embedded torus in $\mathbb{R}^3$.}
  \label{merdian_long}}
\end{figure}

We define an isomorphism between $\pi _{1}(T^{2})$ and $\mathbb{Z}\oplus \mathbb{Z}$ by sending $m$ to $(0,1)$ and $l$ to $(1,0)$. Hence, any class in $\pi _{1}(T^{2})$ can be represented by $(n,m)$
where $n,m\in \mathbb{Z}.$ We neglect the base point here because $T^{2}$ is path connected. Since $\pi
_{1}(T^{2}) $ is abelian we also know that the groups $\pi _{1}(T^{2})$ and $H_{1}(T^{2})$ are isomorphic. In other words two closed curves in $T^{2}$ are homotopic if and only they are homologous.

The curves $m$ and $l$ are easily defined in the case when $T^2$ has the above parameterization. However, this definition is more involved when the torus $T^2$ is embedded in $S^3$. We study these curves in \ref{curves on torus}.

\subsubsection{Knots on the Torus}
Given a closed curve $C$ on the torus $T^{2}$ that represents a class in $\pi _{1}(T^{2})$. Does there exist a simple closed curve $C^{\prime}$ in  $T^{2}$ that is homotopic to $C$? In other words, when can we represent a homotopy class in $\pi _{1}(T^{2})$ by an embedding in $T^2$? The answer in general is \emph{no}. For instance we cannot find a simple closed curve that represents the homotopy class $(2,0)$. On the other hand one can find a simple closed curve that represents the class $(2,3)$. The following two theorems answer this question.

\begin{theorem}(page 19 in \cite{Rolfsen})
\label{mainmain}
Let $c$ be a curve in $T^2$ with a homotopy class $(a,b)$ in $\pi _{1}(T^{2})$. The curve $c$ can be represented by an embedding $S^{1}\rightarrow T^{2}$ if and only if either of the integers $a,b$ are coprime (that is $g.c.d.(a,b)=1$) or one of them is zero and the other is $\pm 1$, or $a=b=0$. 
\end{theorem}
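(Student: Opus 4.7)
The plan is to prove both directions separately, using a covering-space argument for the ``only if'' direction.

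For existence, the case $(a,b)=(0,0)$ is handled by any small null-homotopic loop bounding a disk in $T^2$. For $\gcd(a,b)=1$ (which also covers $(\pm 1,0)$ and $(0,\pm 1)$), I would compose the straight-line map $L(t)=(ta,tb)$, $t\in[0,1]$, with the quotient $\pi\colon\mathbb{R}^2\to T^2$. The resulting loop has class $(a,b)$, and the condition $\pi L(t_1)=\pi L(t_2)$ with $t_1,t_2\in[0,1)$ forces $(t_1-t_2)(a,b)\in\mathbb{Z}^2$; coprimality of $a$ and $b$ then forces $t_1-t_2\in\mathbb{Z}$, hence $t_1=t_2$, so the loop is embedded.

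For the obstruction direction, let $c$ be a simple closed curve of class $(a,b)\neq(0,0)$ and set $d=\gcd(a,b)\geq 1$. The strategy is to lift $c$ to a cylindrical intermediate cover of $T^2$ in which the winding obstruction becomes geometrically visible. Concretely, I would take the covering $p\colon X\to T^2$ corresponding to the rank-one subgroup $H=\langle(a/d,b/d)\rangle\subset\pi_1(T^2)=\mathbb{Z}^2$, so $X=\mathbb{R}^2/H$ is homeomorphic to an open cylinder $S^1\times\mathbb{R}$ with $\pi_1(X)=\mathbb{Z}$. Since $[c]=d\,(a/d,b/d)\in H$, the loop $c$ lifts to a loop $\hat c\colon S^1\to X$ representing $\pm d$ times the generator of $\pi_1(X)$. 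Moreover $\hat c$ is injective: if $\hat c(s_1)=\hat c(s_2)$ then $c(s_1)=p\hat c(s_1)=p\hat c(s_2)=c(s_2)$, forcing $s_1=s_2$. So $\hat c$ is a simple closed curve in $X$.

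The final step is to classify simple closed curves in an open cylinder. Viewing $S^1\times\mathbb{R}$ as the punctured plane $\mathbb{R}^2\setminus\{0\}$, any simple closed curve $\gamma$ bounds a disk $D\subset\mathbb{R}^2$ by the Jordan curve theorem; either $D$ misses the puncture (so $\gamma$ is null-homotopic in the cylinder) or $D$ contains it (so $\gamma$ is isotopic to a small loop around the puncture and represents $\pm 1$ in $\pi_1$). In either case $|[\gamma]|\leq 1$, and applied to $\hat c$ this gives $d\leq 1$, so $d=1$ as required. The main obstacle I anticipate is the classification step, which is classical but requires invoking Jordan to pass from abstract $\pi_1$-data to a concrete dichotomy on embedded disks; the lift construction, the identification of $X$ as a cylinder, and the straight-line existence argument are essentially formal by comparison.
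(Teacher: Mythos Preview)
Your argument is correct in both directions. The existence half via straight-line representatives is standard and clean, and the obstruction half via the cyclic cover $X=\mathbb{R}^2/\langle(a/d,b/d)\rangle$ together with the Jordan-curve dichotomy on the punctured plane is a valid and complete proof that any embedded representative forces $d=1$.

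As for comparison with the paper: there is nothing to compare. The paper does not prove this theorem at all; it states the result with a citation to Rolfsen (page 19) and explicitly writes ``The proofs of the previous two theorems are omitted and the interested reader is referred to first chapter \cite{Rolfsen} for details.'' Your covering-space argument is essentially the classical one Rolfsen gives, so you have supplied exactly the proof the paper chose to outsource.
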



This theorem is useful when we want to know if an embedded curve representation of a certain homotopy class exist on the torus. As Theorem~\ref{mainmain} asserts, such an embedding exists if and only if the pair $(a,b)$ which completely characterizes the homotopy type of the class, satisfies the conditions mentioned in the theorem.  The next theorem also relates the homotopy classes and isotopy classes of curves on $T^2$.  

\begin{theorem}\cite{Rolfsen}
\label{ambient}
If two closed curves without self-intersection on the torus $T^2$ are homotopic then they are ambient isotopic. 
\end{theorem}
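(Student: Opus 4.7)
The plan is to split on the homotopy class $(a,b)$ of the common type. By Theorem~\ref{mainmain} the possibilities are $(a,b)=(0,0)$, or $\gcd(a,b)=1$ (which covers $(\pm 1,0)$, $(0,\pm 1)$, and every primitive pair with both coordinates nonzero). I would handle the null-homotopic case and the essential case separately.

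For the null-homotopic case, let $C$ be a simple closed curve with $[C]=0$ and let $\pi\colon\mathbb{R}^2\to T^2$ be the universal cover. Since $C$ is null-homotopic, any path-lift closes up to a simple closed curve $\widetilde{C}\subset\mathbb{R}^2$, which by the Jordan curve theorem bounds a disk $\widetilde{D}$. Every nontrivial deck transformation is a fixed-point-free translation, so any translate of $\widetilde{D}$ is disjoint from $\widetilde{D}$; hence $\pi|_{\widetilde{D}}$ is an embedding, yielding a disk $D\subset T^2$ with $\partial D=C$. Combining this with the classical fact that any two embedded $2$-disks in a connected surface are ambient isotopic (via the isotopy extension theorem) then finishes this case.

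For the essential case, suppose $C_1$ and $C_2$ both represent a primitive class $(a,b)\neq(0,0)$. My plan is (i) to isotope them into minimal position, which forces them to be disjoint; (ii) to show disjoint homotopic essential curves cobound an annulus in $T^2$; and (iii) to build an ambient isotopy of $T^2$ carrying $C_2$ to $C_1$ through that annulus. For (i) the algebraic intersection number is $(a,b)\cdot(a,b)=0$ by skew-symmetry of the intersection form on $H_1(T^2)$; the bigon criterion says two simple closed curves on a surface that are not in minimal position cobound an innermost bigon that can be removed by a local isotopy, and iterating together with the fact that the minimal geometric intersection number on $T^2$ equals the absolute algebraic intersection number yields disjoint representatives. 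For (ii), cutting $T^2$ along the essential curve $C_1$ produces an annulus $A$ (since $C_1$ is non-separating), and inside $A$ the curve $C_2$ is a simple closed curve homotopic to a core, hence isotopic to it, so it cobounds with one copy of $C_1$ a sub-annulus of $A$. Step (iii) is routine: slide $C_2$ to $C_1$ inside this sub-annulus and extend to $T^2$ by the identity using the isotopy extension theorem.

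The main obstacle is step (i), the bigon reduction; the rest is essentially bookkeeping on top of standard surface topology. A clean alternative that avoids the bigon criterion is to work directly in the universal cover: lift each simple closed curve in class $(a,b)$ to a $\mathbb{Z}^2$-invariant disjoint family of properly embedded lines in $\mathbb{R}^2$, verify that each such line is the graph of a periodic function over the line $\mathbb{R}(a,b)$, and linearly interpolate between this graph and the corresponding straight line to obtain a $\mathbb{Z}^2$-equivariant ambient isotopy, which then descends to an ambient isotopy of $T^2$ straightening the curve. Either route yields the theorem, but each requires one careful geometric lemma (bigon reduction, or graph-structure plus equivariance) as its main technical input.
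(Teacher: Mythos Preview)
The paper does not actually prove this theorem: immediately after stating Theorems~\ref{mainmain} and~\ref{ambient} it says ``The proofs of the previous two theorems are omitted and the interested reader is referred to first chapter \cite{Rolfsen} for details.'' So there is no in-paper argument to compare against.

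Your outline is essentially correct and is a standard way to obtain the result. Two small points worth tightening. In the null-homotopic case, the sentence ``every nontrivial deck transformation is a fixed-point-free translation, so any translate of $\widetilde{D}$ is disjoint from $\widetilde{D}$'' is not quite a proof: fixed-point-freeness rules out overlap of the boundary circles (these are distinct lifts of an embedded curve), but you still need a word to exclude nesting $g\widetilde{D}\subset\widetilde{D}$; iterating $g$ gives an infinite descending chain of translates inside a bounded set, contradicting that $g$ is a nonzero translation. In step~(ii) of the essential case, you should say explicitly why $C_2$ is essential in the cut-open annulus $A$ (a disk it bounded in $A$ would persist in $T^2$), after which ``essential simple closed curve in an annulus is isotopic to the core'' gives what you want. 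Your alternative route---lifting to $\mathbb{R}^2$ and straightening each lift to the line of slope $(a,b)$ by an equivariant isotopy---is in fact the argument Rolfsen gives, so in that sense your second approach \emph{is} the reference the paper defers to.
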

In summary, if we are given a homotopy class $(a,b)$ in $T^2$ such that $a=b=0$, or one of them is zero and the other is $\pm 1$ or $g.c.d.(a,b)=1$, then we can represent the class $(a,b)$ by a simple closed curve in $T^2$. Moreover, given two such representations of this homotopy class $(a,b)$, without self-intersection, then one can find an ambient isotopy on $T^2$ that takes the first representation to the second one. The proofs of the previous two theorems are omitted and the interested reader is referred to first chapter \cite{Rolfsen} for details. See also chapter $6$ in \cite{prasolov1997knots}.

\subsection{The longitude and the meridian of an embedded torus in $S^3$}
\label{curves on torus}
Let $K$ be an oriented knot in $S^{3}$. Let $N\subset S^{3}$ be a tubular
neighborhood around $K,$ i.e. a solid torus embedded in the 3-sphere whose core
is the knot $K$. It is easiest to think of $N$ as just a thickening of $K$.  Let $X$ denotes the closure of $S^{3}-N.$ We assume that $
N $ is embedded in $S^{3}$ so that $X$ is a manifold. In this case it clear
that $\partial X=\partial N=T^{2}.$ The Mayer-Vietoris exact sequence for $
S^{3}=N\cup X$ with $N\cap X=T^{2}$ reads
\begin{equation}
\label{exact}
H_{2}(S^{3}) \rightarrow H_{1}(X\cap N)\rightarrow H_{1}(X)\oplus H_{1}(N)\rightarrow H_{1}(S^{3})
\end{equation}
From basic homology theory we know that $H_{1}(S^3)=H_{2}(S^3)=0$. Moreover, since $X\cap N$ is homeomoprhic to a torus then we know from the previous section that $H_{1}(X\cap N)=\mathbb{Z}\oplus \mathbb{Z}$. Finally, since $N$ is homotopic to the knot $K$ then $ H_{1}(N)$ is isomorphic to $\mathbb{Z}$ hence we can write equation (\ref{exact}) as follows:
\begin{equation}
\label{this}
\indent \indent  0 \indent \rightarrow \indent \mathbb{Z}\oplus \mathbb{Z} \indent \rightarrow \indent H_{1}(X)\oplus \mathbb{Z} \indent \rightarrow \indent 0
\end{equation}

The sequence (\ref{this}) is exact. Hence the middle map is an isomorphism and thus $H_{1}(X)$ is isomorphic to $\mathbb{Z}$. We will choose a specific isomorphism between $H_{1}(X)$ and $\mathbb{Z}$
later in this section. By the Mayer-Vietoris Theorem the isomorphism 
\begin{equation}
\label{main map}
 i_{\ast }\oplus j_{\ast }: H_{1}(X\cap N)
\rightarrow H_{1}(X)\oplus H_{1}(N)
\end{equation}

\noindent is given explicitly by $i_{\ast }\oplus j_{\ast }([c])=(i_{\ast }[c],j_{\ast }[c])$
where $i:X\cap N\hookrightarrow X$ and $j:X\cap N\hookrightarrow N$ are the
inclusion maps. Note that the map $i_{\ast }$ pushes curves on the surface $
X\cap N$ into the knot exterior $X,$ and similarly the map $j_{\ast }$
pushes curves on $X\cap N$ into the solid torus $N$.

\subsubsection{The meridian}

Recall that $N$ is homemorphic to a solid torus, and its
boundary $X\cap N$ is homeomorphic to a torus. We then know that $H_{1}(X\cap N)\cong \pi_{1}(T^{2})$ is
generated by two curves, and by Theorem~\ref{thm5.2}, each of which can be chosen to be simple and closed.
One of these curves, denoted $\eta,$ can be chosen to encircle the knot $K$
and bound a disk in $N.$ We can further choose the orientation on $\eta$ so that $lk(K,\eta)=1$. Because $\eta$
bounds a disk in $N$ it is null-homologous in $N$ and hence $j_{\ast}[\eta]=0$ in $H_{1}(N)$. Now, the meridian $[\eta]$ represents a generator of $H_{1}(X\cap N),$
and thus any isomorphism $i_{\ast}\oplus j_{\ast}$ must map it to a generator in
$H_{1}(X)\oplus H_{1}(N).$  Using our explicit definition of the map, we see that $i_{\ast}\oplus j_{\ast}([\eta])=$ $(i_{\ast}[\eta],j_{\ast}[\eta])=(i_{\ast}[\eta],0)$. In other words $i_{\ast}[\eta]$ generates the group $H_{1}(X)$. We use this generator to give a specific isomorphism $H_{1}(X)\rightarrow \mathbb{Z}$ defined by sending $i_{\ast}[\eta]$ to 1. We will refer to the homology class $i_{\ast}[\eta]$ in $H_1(X)$ by $[\eta]$. 

\subsubsection{The preferred longitude}
Note that the solid torus $N$ is homotopy equivalent to its core $K$, allowing us to represent the generator of $H_{1}(N)$ by the oriented knot itself.  We then fix an isomorphism
$H_{1}(N) \rightarrow \mathbb{Z}$ that maps $[K]$ to $1$. In the previous section we defined the isomorphism $H_{1}(X)\rightarrow \mathbb{Z}$ that sends the homology class of the curve $\eta$ to $1$ in $\mathbb{Z}$. Using these two isomorphisms we can construct a specific isomorphism between $H_{1}(X)\oplus H_{1}(N)$ and $\mathbb{Z}\oplus \mathbb{Z}$ defined by $(0,[K]) \rightarrow (0,1)$ and $([\eta],0) \rightarrow
(1,0). $ We will assume this identification from now on. 
Since $i_{\ast }\oplus j_{\ast }$ is an isomorphism, there exists a
unique element $[\gamma]$ in $H_{1}(X\cap N)$ that maps to $(0,1)$.  Since the class $(0,1)$ is a generator in $H_{1}(X)\oplus
H_{1}(N)$, the image element $[\gamma]$ under the isomophism $(i_{\ast }\oplus j_{\ast })^{-1}$ must also be a generator in $H_{1}(X\cap N)$. Hence, by Theorem \ref{mainmain}, we can represent the class $[\gamma]$ by a simple closed curve (that will also be denoted $\gamma$) on $T^{2}\cong X\cap N$.  We interpret $\gamma$ as follows: $i_{\ast }\oplus j_{\ast }([\gamma
])=(i_{\ast}[\gamma],j_{\ast}[\gamma])=(0,1)\in H_{1}(X)\oplus H_{1}(N)$ means that $[\gamma]_{X}$ is null-homologous in $H_{1}(X)$ and $[\gamma]_{N}=[K]$ in $H_{1}(N)$.

\begin{remark}
If we consider a simple closed curve as a representative of the homology class $[\gamma]_{X}$, and denote it by $\gamma$, then this curve can be seen to be obtained by an ambient isotopy of the knot $K$ inside $N$. We can choose the curves that connect the beginning of the ambient isotopy, namely $K$, to the end of it, $\gamma$, to be a collection of simple closed embedded curves in $N$ and hence these curves define a ribbon tangle or a framed knot with boundary being the union of knot $K$ and the curve $\gamma$. 

\end{remark}
We give some facts about the meridian and the preferred longitude in the following definition.

\begin{definition}\label{Merid-long}
Let $K$ be an oriented knot in (oriented) $S^{3}$ with solid torus
neighborhood $N$. A \textit{\textbf{meridian}} $\eta $ of $K$ is a non-separating simple curve
in $\partial N$ that bounds a disk in $N$. A \textit{\textbf{preferred longitude}} $\gamma $
of $K$ is a simple closed curve in $\partial N$ that is homologous to $K$ in
$N$ and null-homologous in the exterior of $K$. 
\end{definition}

The previous discussion about meridian and longitude implies immediately the following theorem.

\begin{theorem}
Let $K$ be an oriented knot in (oriented) $S^{3}$ with solid torus
neighborhood $N$. Then the following facts hold:
\begin{itemize}
\item The meridian $\eta$ is a simple closed
curve that generates the kernel of the homomorphism $H_{1}(X\cap
N)\rightarrow H_{1}(N)$.

\item The preferred longitude $\gamma $ is a simple closed
curve that generates the kernel of the homomorphism $H_{1}(X\cap
N)\rightarrow H_{1}(X)$.
\end{itemize}
\end{theorem}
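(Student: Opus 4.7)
The plan is to read off both statements as immediate consequences of the Mayer--Vietoris analysis carried out just before the theorem, together with Theorem~\ref{mainmain} to guarantee that the relevant homology classes admit simple closed curve representatives. All the work has really been done in setting up the identifications; what remains is to extract the kernel statements and to justify the adjective ``simple closed''.

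First I would recall the short exact sequence
\begin{equation*}
0 \longrightarrow H_{1}(X\cap N) \xrightarrow{\, i_{\ast}\oplus j_{\ast} \,} H_{1}(X)\oplus H_{1}(N) \longrightarrow 0,
\end{equation*}
so that $i_{\ast}\oplus j_{\ast}$ is an isomorphism. Under the chosen identifications $H_{1}(X)\oplus H_{1}(N)\cong \mathbb{Z}\oplus\mathbb{Z}$, the classes $[\eta]$ and $[\gamma]$ map to $(1,0)$ and $(0,1)$ respectively, so they form a $\mathbb{Z}$-basis of $H_{1}(X\cap N)\cong\mathbb{Z}\oplus\mathbb{Z}$. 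Any class in $H_{1}(X\cap N)$ therefore has a unique expression $a[\eta]+b[\gamma]$ with $a,b\in\mathbb{Z}$, and one computes $j_{\ast}(a[\eta]+b[\gamma])=b$ and $i_{\ast}(a[\eta]+b[\gamma])=a$. From these formulas, the kernel of $j_{\ast}$ consists exactly of the multiples of $[\eta]$ and the kernel of $i_{\ast}$ consists exactly of the multiples of $[\gamma]$, which is the content of the two bulleted claims at the homological level.

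Next I would address the ``simple closed curve'' assertions. For the meridian, the class $[\eta]$ was constructed at the outset as (the homology class of) a curve in $\partial N$ that bounds a disk in $N$, so a simple closed representative exists by definition. For the preferred longitude, $[\gamma]\in H_{1}(X\cap N)\cong \pi_{1}(T^{2})$ corresponds to the primitive class $(0,1)$ under our isomorphism; since the coordinates $(0,1)$ satisfy the hypotheses of Theorem~\ref{mainmain} (one is zero and the other is $\pm 1$), the class is represented by an embedded $S^{1}\hookrightarrow T^{2}$, i.e. a simple closed curve on $\partial N$. Moreover Theorem~\ref{ambient} guarantees that any two such representatives are ambient isotopic on the torus, which shows the simple closed curve is essentially canonical.

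The main (very mild) obstacle is just to be careful about the two layers of identification: $H_{1}(X\cap N)\cong\pi_{1}(T^{2})\cong\mathbb{Z}\oplus\mathbb{Z}$ on the one hand, and $H_{1}(X)\oplus H_{1}(N)\cong\mathbb{Z}\oplus\mathbb{Z}$ on the other, with the isomorphism between them being $i_{\ast}\oplus j_{\ast}$ rather than the obvious identity. Once the convention ``$[\eta]\mapsto(1,0)$ in $H_{1}(X)$ and $[K]\mapsto 1$ in $H_{1}(N)$'' is fixed as in the paragraphs above, the kernel calculations are one-line verifications, and no further geometric input is needed beyond the two cited theorems about embedded curves on $T^{2}$.
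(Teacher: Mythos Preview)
Your proposal is correct and follows essentially the same approach as the paper. The paper simply states that the theorem ``implies immediately'' from the preceding Mayer--Vietoris discussion, and you have faithfully spelled out that implication: the images $(i_\ast\oplus j_\ast)([\eta])=(1,0)$ and $(i_\ast\oplus j_\ast)([\gamma])=(0,1)$ make the kernel computations transparent, and the appeal to Theorem~\ref{mainmain} for a simple closed representative of $[\gamma]$ is exactly how the paper obtains this in the paragraph defining the preferred longitude. Your invocation of Theorem~\ref{ambient} is not needed for the theorem as stated (uniqueness is handled separately in Proposition~\ref{unique}), but it does no harm.
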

From this theorem we also obtain the following corollary.

\begin{corollary}
The median $\eta$ is characterized by a simple closed curve on $X\cap N$ that bounds a disk in $N$. On the other hand, the preferred longitude $\gamma $ is characterized by a simple closed curve on $X\cap N$ that bounds a surface in $X$.
\end{corollary}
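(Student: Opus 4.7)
The plan is to derive the corollary by translating the kernel characterization of the preceding theorem into geometric bounding conditions, then invoking the classification of simple closed curves on the torus (Theorems~\ref{mainmain} and \ref{ambient}) to upgrade ``null-homologous'' to ``ambient isotopic to $\eta$'' (resp.\ $\gamma$). For each claim I will prove both implications: that a simple closed curve on $\partial N = X\cap N$ with the stated bounding property must represent $\pm[\eta]$ or $\pm[\gamma]$ in $H_1(X\cap N)$, and that $\eta$ and $\gamma$ themselves satisfy the bounding property.

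For the meridian, suppose $c$ is an essential simple closed curve on $\partial N$ that bounds a disk $D \subset N$. Then $c$ is null-homologous in $N$, so $j_*[c] = 0$; by the preceding theorem this forces $[c] \in \ker j_* = \mathbb{Z}[\eta]$, say $[c] = n[\eta]$. In the $(l,m)$-basis of $\pi_1(T^2)$ in which $[\eta]$ is one generator, this class has the form $(n,0)$; by Theorem~\ref{mainmain} such a class admits a simple closed representative only when $n = \pm 1$ (the case $n = 0$ would give a null-homotopic curve, excluded by essentiality). Thus $[c] = \pm[\eta]$, and Theorem~\ref{ambient} provides an ambient isotopy of $\partial N$ carrying $c$ to $\eta$. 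The converse is immediate from Definition~\ref{Merid-long}.

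For the preferred longitude the argument is parallel, with $i_*$ replacing $j_*$: a simple closed curve $c$ bounding a compact orientable surface $\Sigma \subset X$ is null-homologous in $X$, so $i_*[c] = 0$, hence $[c] \in \ker i_* = \mathbb{Z}[\gamma]$; the same simpleness constraint via Theorem~\ref{mainmain} forces $[c] = \pm[\gamma]$, giving an isotopy to $\gamma$ by Theorem~\ref{ambient}. For the converse --- that $\gamma$ itself bounds a surface in $X$ --- I will take any Seifert surface $\Sigma_K$ for $K$ in $S^3$ and excise a small tubular collar of $K$ inside $N$, leaving a compact orientable surface properly embedded in $X$ whose boundary is a simple closed curve on $\partial N$. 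The main obstacle will be verifying that this boundary curve is genuinely isotopic to $\gamma$ rather than to some class $\gamma + k\eta$; this is handled by observing that the resulting curve projects to $[K]$ in $H_1(N)$ (as it is a parallel pushoff of $K$ inside $N$) and is null-homologous in $X$ (as it bounds the modified Seifert surface), and by the isomorphism $i_*\oplus j_*$ these two conditions uniquely pin down the class $[\gamma]$.
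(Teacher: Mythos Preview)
The paper offers no proof of this corollary (it is asserted as an immediate consequence of the preceding theorem), and your argument is a correct and careful elaboration of that implicit reasoning, invoking Theorems~\ref{mainmain} and~\ref{ambient} precisely where they are needed to pass from the kernel characterization to uniqueness up to ambient isotopy. Your Seifert-surface construction for the converse direction of the longitude claim is exactly the argument the paper itself gives later in the proof of Theorem~\ref{thm5.2}.
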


It is important to notice that once we choose the natural orientation on the meridian and longitude, as in the construction above, these curves are unique on $T^2$ up to ambient isotopy.

\begin{proposition}
\label{unique}
Let $K$ be an oriented knot in $S^3$. Let $N$ and $X$ be as before. There exist two oriented curves $\eta $ and $\gamma $ unique up to ambient isotopy on $T^2=X\cap N$ that satisfy Definition~\ref{Merid-long}.
\end{proposition}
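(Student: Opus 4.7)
The strategy is to reduce both existence and uniqueness to the homological level using the Mayer-Vietoris isomorphism $i_*\oplus j_*\colon H_1(T^2)\xrightarrow{\sim} H_1(X)\oplus H_1(N)$ that was already assembled in the preceding subsection, and then to lift the resulting homology classes back to simple closed curves via Theorems~\ref{mainmain} and~\ref{ambient}. Because $\pi_1(T^2)$ is abelian, homology and homotopy of closed curves on $T^2$ coincide, so Theorem~\ref{ambient} converts ``same homology class'' directly into ``ambient isotopic.''

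For existence, I would produce $\eta$ by taking a properly embedded meridional disk $D\subset N$ (a compressing disk for $\partial N$ meeting the core $K$ transversally once); its boundary $\partial D\subset T^2$ is a simple closed curve bounding a disk in $N$, oriented so that $lk(K,\partial D)=1$ as in the construction preceding the definition. For $\gamma$, define $[\gamma]:=(i_*\oplus j_*)^{-1}(0,[K])\in H_1(T^2)$; since $(0,1)$ is primitive in $H_1(X)\oplus H_1(N)$ under the identifications already fixed, $[\gamma]$ is a generator of $H_1(T^2)\cong\mathbb{Z}\oplus\mathbb{Z}$, so by Theorem~\ref{mainmain} it is represented by a simple closed curve on $T^2$, which we orient so that $j_*[\gamma]=+[K]$.

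For uniqueness, suppose $\eta'$ is any oriented simple closed curve on $T^2$ satisfying the meridian clause of Definition~\ref{Merid-long}. Bounding a disk in $N$ forces $j_*[\eta']=0$, so $[\eta']$ lies in $\ker j_*$, which under $i_*\oplus j_*$ corresponds to the rank-one subgroup $\mathbb{Z}\cdot(i_*[\eta],0)\subset H_1(X)\oplus H_1(N)$. Being non-separating on $T^2$ makes $[\eta']$ primitive in $H_1(T^2)$, so $[\eta']=\pm[\eta]$; the orientation convention $lk(K,\eta')=1$ pins down the sign. Similarly, any $\gamma'$ satisfying the longitude clause has $(i_*\oplus j_*)([\gamma'])=(0,[K])$, so injectivity of $i_*\oplus j_*$ gives $[\gamma']=[\gamma]$. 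In each case, $\eta'$ and $\eta$ (respectively $\gamma'$ and $\gamma$) are simple closed curves on $T^2$ representing the same homology class, hence the same homotopy class, and Theorem~\ref{ambient} supplies the desired ambient isotopy on $T^2$.

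The only real subtlety, and hence the main thing to watch, is the orientation bookkeeping: one must check that the sign of the homology class is indeed determined by the data in Definition~\ref{Merid-long} — for $\eta$ by the linking-number normalization $lk(K,\eta)=1$, and for $\gamma$ by requiring $j_*[\gamma]=+[K]$ rather than $-[K]$. Everything else is a direct unwinding of the Mayer-Vietoris diagram combined with the two torus-curve theorems; no further geometric input is needed.
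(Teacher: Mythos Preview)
Your proof is correct and follows essentially the same route as the paper: existence is inherited from the Mayer--Vietoris construction already set up, uniqueness is argued by showing any two candidates represent the same homology class on $T^2$ (using that $\ker j_*$ and $\ker i_*$ are each rank one, with the orientation conventions fixing the sign), and then Theorem~\ref{ambient} promotes homological equality to ambient isotopy. Your treatment of the longitude is if anything slightly tidier than the paper's, since you use that Definition~\ref{Merid-long} already forces $j_*[\gamma']=+[K]$ and hence $(i_*\oplus j_*)([\gamma'])=(0,[K])$ outright, whereas the paper first passes through $\ker i_*$ to get $[\gamma']=\pm[\gamma]$ and then resolves the sign separately.
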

\begin{proof}
The existence of the curves has already been established. For the uniqueness suppose that $\eta^{\prime}$ is another curve on $X\cap N$ with the same properties of $\eta$. Recall that the curve $\eta$ was a representative of a certain homology class in $H_1(T^2)$ and this homology class is a homology class that generates the kernel of the map $H_{1}(X\cap N)\rightarrow H_{1}(N)$ and this kernel is isomorphic to $\mathbb{Z}$. Hence each of the curves $\eta$ and $\eta^{\prime}$ must be a representative of a generator of kernel and hence $[\eta]=\pm[\eta^{\prime}]$. Now recall the construction of meridian above and notice that we can choose that orientation of $\eta$ and $\eta^{\prime}$ so that $lk(K,\eta)=lk(K,\eta^{\prime})=1$. Now with this choice we must have $[\eta]=[\eta^{\prime}]$. Since $\eta$ and $\eta^{\prime}$ are simple closed curves, again by construction, we conclude, by Theorem \ref{ambient}, that $\eta$ and $\eta^{\prime}$ are ambient isotopic. Similarly suppose that curve $\gamma^{\prime}$ is a curve on $X\cap N$ with the same properties as those of $\gamma$. These two curves are representatives of a generator of the kernel of the map $H_{1}(X\cap N)\rightarrow H_{1}(X)$ and hence $[\gamma]=\pm[\gamma^{\prime}]$. The orientation of two curves can be chosen so that they are both parallel to the oriented knot $K$ then we conclude that $[\gamma]=[\gamma^{\prime}]$ and thus, by Theorem \ref{ambient}, the curves $\gamma^{\prime}$ and $\gamma^{\prime}$ are ambient isotopic.
\end{proof}

\begin{remark}
It is worth noting that while a meridian can be defined for a solid torus, a preferred longitude requires a specified embedding of the solid torus into $S^{3}.$
\end{remark}

\begin{remark}
\label{long}
The preferred longitude $\gamma $ is \textit{not} determined
completely by stating that it is a simple closed curve on $\partial N$ that
generates $H_{1}(N).$ Actually there are infinitely many homology classes of curves on $\partial
N$ with this property. In fact a curve on $\partial N$ that generates $H_{1}(N)$ and is positively oriented with the knot is usually referred to by a \textit{longitude curve}. Note that there are infinitely curves on $\partial N$, up to homotopy, that satisfy this condition. On the other hand, adding the condition that this curve  is also trivial
in $H_{1}(X)$ determines that curve uniquely up to ambient isotopy on $\partial N,$ as we have shown in Proposition \ref{unique}. This also explains the adjective "preferred" when we want to describe the preferred longitude to distinguish this curve amongst many other longitude curves on $\partial N$. 
\end{remark}

\subsubsection{Different characterizations of the meridian and the preferred longitude}

It is useful to 
have many characterization for the meridian and the longitude. The following theorem summarizes most of the characterization of the meridian.

\begin{theorem}
Let $K$ be an oriented knot in (oriented) $S^{3}$ and let $X$ and $N$ be
defined as before. Suppose that $\eta $ is essential in $\partial N,$ then
the following are equivalent:

$(1)$ $\eta $ is homologically trivial in $N,$

$(2)$ $\eta $ is homotopically trivial in $N,$

$(3)$ $\eta $ bounds $\ $a disk in $N.$

\end{theorem}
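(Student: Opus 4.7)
The plan is to close the triangle of implications $(3) \Rightarrow (2) \Rightarrow (1) \Rightarrow (2) \Rightarrow (3)$, noting that two of the directions are essentially formal and the geometric content is concentrated in $(1) \Rightarrow (2)$ and $(2) \Rightarrow (3)$. I would dispose of the easy directions first and then treat the harder two.

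For $(3) \Rightarrow (2)$ I would simply observe that a disk in $N$ bounded by $\eta$ is, by definition, a null-homotopy of $\eta$ in $N$. For $(2) \Rightarrow (1)$ I would invoke the standard fact that the Hurewicz map $\pi_{1}(N) \to H_{1}(N)$ is well defined, so any null-homotopic loop is null-homologous. For $(1) \Rightarrow (2)$ my argument would use that $N$ is a solid torus, which deformation retracts onto its core circle; hence $\pi_{1}(N) \cong \mathbb{Z}$ is abelian, forcing the Hurewicz homomorphism to be an isomorphism (its kernel, the commutator subgroup, is trivial). Thus a null-homologous loop in $N$ is automatically null-homotopic.

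The substantive step is $(2) \Rightarrow (3)$. Let $m$ and $\ell$ denote a standard meridian and longitude generating $\pi_{1}(\partial N) \cong \mathbb{Z} \oplus \mathbb{Z}$. Because $\eta$ is essential and simple closed on $\partial N \cong T^{2}$, Theorem~\ref{mainmain} forces $[\eta] = a[m] + b[\ell]$ with $\gcd(|a|,|b|) = 1$. Under the inclusion-induced map $\pi_{1}(\partial N) \to \pi_{1}(N) \cong \mathbb{Z}$, the class $[m]$ maps to $0$ (since $m$ bounds a meridional disk in $N$) while $[\ell]$ maps to a generator. Null-homotopy of $\eta$ in $N$ therefore forces $b = 0$, whence $a = \pm 1$, so $[\eta] = \pm [m]$ in $\pi_{1}(\partial N)$. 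Since $\eta$ and $m$ are both simple closed curves representing the same class up to orientation, Theorem~\ref{ambient} supplies an ambient isotopy of $\partial N$ taking $\eta$ to $m$; extending this isotopy through a collar neighborhood of $\partial N$ to an ambient isotopy of $N$ and transporting the standard meridional disk along it produces a disk in $N$ bounded by $\eta$.

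The step I expect to be the main obstacle is precisely this last passage: going from the algebraic conclusion $[\eta] = \pm [m]$ in $\pi_{1}(\partial N)$ to the geometric statement that $\eta$ itself bounds a disk. The algebraic information alone is insufficient, and one genuinely needs both the classification of simple closed curves on $T^{2}$ (Theorem~\ref{mainmain}) and the upgrade from homotopy to ambient isotopy on the torus (Theorem~\ref{ambient}), together with the standard fact that an ambient isotopy on $\partial N$ can be extended through a collar into $N$, in order to produce the required bounding disk.
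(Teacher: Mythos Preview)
The paper states this theorem as a summary of characterizations of the meridian but does not supply a proof; there is no argument in the text against which to compare your attempt. Your proof is correct and follows the expected route: the equivalence of (1) and (2) is immediate from $\pi_{1}(N)\cong\mathbb{Z}$ being abelian (so the Hurewicz map is an isomorphism), $(3)\Rightarrow(2)$ is trivial, and the only substantive implication $(2)\Rightarrow(3)$ is handled exactly as one should, by invoking Theorems~\ref{mainmain} and~\ref{ambient} from the paper to identify $\eta$ with the standard meridian up to ambient isotopy of $\partial N$, then extending through a collar to carry the meridional disk to a disk bounded by $\eta$.

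One minor remark: your argument implicitly uses that $\eta$ is a \emph{simple closed} curve on $\partial N$, since Theorems~\ref{mainmain} and~\ref{ambient} apply only to embedded curves. This is the intended reading of ``essential in $\partial N$'' in the paper's context (and is consistent with how $\eta$ is used elsewhere), but it is worth making the hypothesis explicit. Without embeddedness, $(2)\Rightarrow(3)$ with an embedded disk would require Dehn's Lemma rather than the elementary torus argument you give.
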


The choice of a meridian of a knot does not include any ambiguity. However,
the choice of a preferred longitude needs more care. There is an easy
characterization for the preferred framing given in terms of the linking number. This characterization is given in the following theorem.




\begin{theorem}
\label{per long}
The preferred longitude $\gamma$ of a knot $K$
in $S^{3}$ is characterized by a simple closed curve on $N$ such that $
lk(\gamma ,K)=0$.
\end{theorem}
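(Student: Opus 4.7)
The plan is to deduce this characterization as an essentially direct consequence of Theorem \ref{chara}(1), which already packages the equivalence between ``null-homologous in $X$'' and ``linking number zero with $K$.'' The setup is well-posed: since $\gamma$ lies on $\partial N$ and $K$ lies in the interior of $N$, the curves $\gamma$ and $K$ are automatically disjoint, so the linking number $lk(\gamma, K)$ is defined.

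The main step is the following. From Definition \ref{Merid-long}, the preferred longitude is a simple closed curve on $\partial N$ that is homologous to $K$ in $N$ (i.e., a longitude in the sense of Remark \ref{long}) and null-homologous in $X = \overline{S^3 \setminus N}$. I would view $\gamma$ as a curve in $X$ and apply Theorem \ref{chara}(1), which writes $[\gamma]_X = s[\eta]$ in $H_1(X) \cong \mathbb{Z}\langle [\eta]\rangle$ with $s = lk(\gamma, K)$. Hence $[\gamma]_X = 0$ if and only if $lk(\gamma, K) = 0$. Substituting this equivalence into the definition of the preferred longitude yields both directions of the characterization.

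The main obstacle is not a technical one but an interpretive one: to make the characterization sharp, so that it really picks out $\gamma$ uniquely up to ambient isotopy on $\partial N$, I would combine the linking-number condition with the standing requirement that $\gamma$ be a longitude in the sense of Remark \ref{long} (a simple closed curve on $\partial N$ representing the class of $K$ in $H_1(N)$ and oriented parallel to $K$); the uniqueness up to ambient isotopy then follows from Proposition \ref{unique} and Theorem \ref{mainmain}, since the only simple closed curves on $T^2 \cong \partial N$ with trivial meridional component in $\pi_1(T^2)$ are, up to isotopy and orientation reversal, the preferred longitude itself.
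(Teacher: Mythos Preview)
Your proof is correct and follows essentially the same approach as the paper: both write $[\gamma]_X = n[\eta]$ in $H_1(X)$, identify $n$ with $lk(\gamma,K)$ via Theorem~\ref{chara}(1) (the paper phrases this as ``by the definition of the linking number''), and conclude that $[\gamma]_X = 0$ if and only if $lk(\gamma,K)=0$. Your additional paragraph on the interpretive issue---that the linking-number condition must be combined with the longitude condition from Remark~\ref{long} to genuinely characterize $\gamma$ up to isotopy---is a worthwhile clarification that the paper leaves implicit.
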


\begin{proof}
Viewing $[\gamma ]_{X}$ as an element in $X$ we can write $[\gamma ]_{X}=n$ $[\eta ]$ where
$[\eta ]$ is the generator of $H_{1}(X)$ and $n$ is some integer. The
integer $n$ is, by the definition of the linking number, $lk(\gamma ,K)$.
If $[\eta ]$ is a preferred longitude then by definition $[\gamma ]_{X}=0$ and hence $lk(\gamma ,K)=0$. On the other hand, if $lk(\gamma ,K)=0$ then $[\gamma ]_{X}=0$ and hence the result follows.
\end{proof}

\subsection{The relation between the longitudes and the framings of a knot }
\label{herethere}
In this section we 
relate the notions of longitudes and framings of a knot.
Let $K$ be a knot in $S^{3}.$  Every framing of $K$ gives rise to a longitude of $K$ on $X\cap N$ and vise versa. We first show that the zero-framing corresponds to the preferred longitude.

Choose an orientation of the knot $K$. We know that $H_{1}(S^{3}\backslash
K)=H_{1}(X)=\mathbb{Z}.$ We can pick the generator to be $[\eta ]$ the meridian of the tubular
neighborhood around $K$. Choose a framing $V$ for $K$. We know that this framing
gives rise to another knot $K^{\prime }$ that is linked with $K$
and the linking number between $K$ and $K^{\prime }$ is precisely the framing integer determined by the framing $V$. Now the curve $K^{\prime }$ represents an
element of $H_{1}(S^{3}\backslash K)=H_{1}(X)$ and hence it can be written
as $m[\eta ]_{X}$ for some integer $m$. We
conclude that every framing corresponds to some integer $m$ in the homology
of the exterior of the knot $K$. In particular the zero-framing
corresponds to the integer $0$ and hence the linking number zero. Thus, by Theorem \ref{per long}, the zero-framing of a knot $K$ corresponds to the preferred longitude $\gamma$ of a tubular neighborhood of the knot $K.$ We have proven the following theorem.

\begin{theorem}
\label{here}
Let $K$ be a zero-framed knot in $S^{3}$. Suppose that $N$ is a tubular
neighborhood of the knot $K\ $that intersects the ribbon of $K$ in a simple
closed curve $\gamma$.  Then $\gamma $ is the preferred longitude of a tubular neighborhood of the knot $K.$
\end{theorem}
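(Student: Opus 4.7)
The plan is to reduce this statement directly to the linking-number characterization of the preferred longitude given in Theorem \ref{per long}, together with the identification between the framing integer and the self-linking number established earlier in Definition \ref{pushoff} and the surrounding discussion.

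First I would unpack the geometric setup. The ribbon associated to the framed knot $(K,V)$ is, by construction, the annulus swept out when $K$ is pushed in the direction of the framing vector field $V$, and its ``other'' boundary component is the pushoff $K'$ of $K$. If $N$ is chosen to be a sufficiently thin tubular neighborhood of $K$, the intersection of the ribbon with $\partial N$ is a simple closed curve $\gamma$ that is ambient isotopic (inside a slightly larger neighborhood of $K$, rel.\ $K$) to the pushoff $K'$. Orienting $\gamma$ parallel to $K$ identifies $\gamma$ with $K'$ as oriented knots up to isotopy in the complement of $K$, and in particular preserves linking numbers with $K$.

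Next I would compute $lk(K,\gamma)$. By the previous paragraph $lk(K,\gamma)=lk(K,K')$, and by Definition \ref{pushoff} this self-linking number is exactly the framing integer of $(K,V)$. Since $K$ is zero-framed by hypothesis, we obtain $lk(K,\gamma)=0$. Finally, since $\gamma$ is a simple closed curve on $\partial N$ with $lk(\gamma,K)=0$, Theorem \ref{per long} immediately identifies $\gamma$ as the preferred longitude of $K$.

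The one point that takes a sentence of care (and is the only real obstacle) is the passage from ``$\gamma = \partial N \cap \text{ribbon}$'' to ``$\gamma$ equals the pushoff $K'$ up to isotopy in $S^{3}\setminus K$.'' This is transparent geometrically, but worth stating explicitly because Theorem \ref{per long} and Definition \ref{pushoff} are phrased in terms of the pushoff rather than in terms of a ribbon intersection. Once this identification is in place, the argument is essentially a chain of three equalities: framing integer $=$ self-linking number $=$ $lk(K,\gamma)$ $=0$, followed by a single appeal to the linking-number characterization of the preferred longitude.
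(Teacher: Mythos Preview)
Your proposal is correct and follows essentially the same route as the paper: the paper's argument (given in the paragraph immediately preceding the theorem) also identifies the pushoff with the curve on $\partial N$, computes its linking number with $K$ as the framing integer, sets this equal to zero, and then invokes Theorem~\ref{per long}. If anything, you are slightly more explicit than the paper in justifying why the ribbon's intersection curve $\gamma$ on $\partial N$ agrees with the pushoff $K'$ up to isotopy in $S^3\setminus K$, a point the paper treats as implicit.
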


This theorem can be generalized to characterize any framing for a given knot. To see this let $K$ be a framed knot and let $N$
be its tubular neighborhood and $X$ its exterior.  Then $K$ intersects the
torus $\partial N$ in a simple closed curve, say $d$ that winds $m^{\prime }$
times around the meridian and $1$ time around the preferred longitude.  Thus it can
be represented by
\begin{equation}
\label{ggg}
   d=m^{\prime }[\eta ]+[\gamma ]. 
\end{equation}
See also Figure ~\ref{framedtorus123}.
We want to show that $m^{\prime }$
 is precisely the framing integer of $K$. Recall that the framing number
is the self-linking number of $K$ which is by definition $lk(d,K).$

\begin{figure}[htb]
  \centering
   {\includegraphics[scale=0.12]{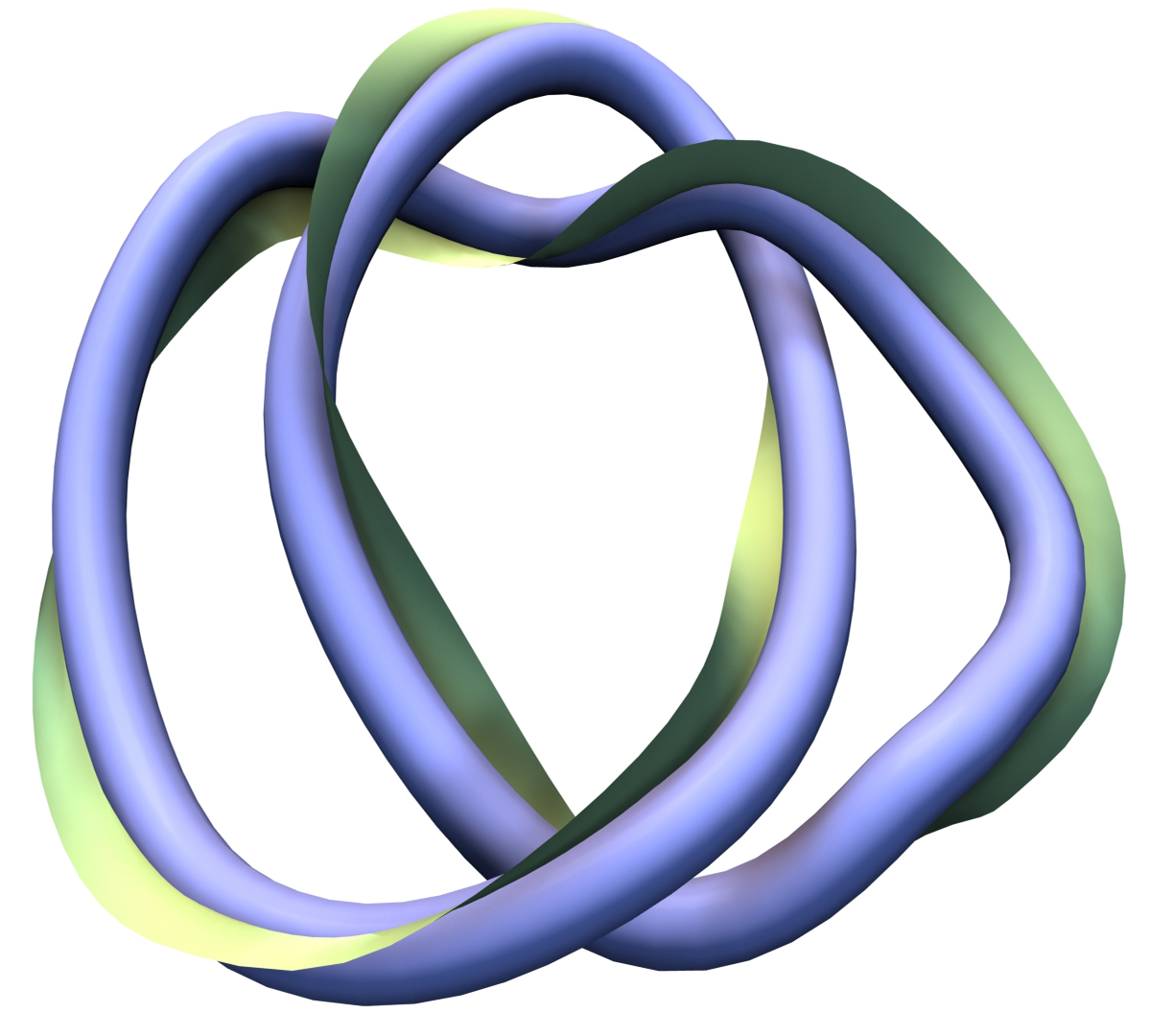}
  \caption{The curve $d$ is obtained by the intersection of the framed knot $K$ with the torus $\partial N$.}
  \label{framedtorus123}}
\end{figure}

To this end consider the image of the curve $d$ under the isomorphism $i_{\ast }\oplus j_{\ast }$. This can be seen to be  $(i_{\ast
}\oplus j_{\ast })(d)=(i_{\ast
}\oplus j_{\ast })(m^{\prime }[\eta ]+[\gamma ])=m^{\prime }(i_{\ast }\oplus
j_{\ast })([\eta ])+(i_{\ast }\oplus j_{\ast })([\gamma ])=m^{\prime }(i_{\ast
}([\eta ]),j_{\ast }([\eta ]))+(i_{\ast }([\gamma ]),j_{\ast
}([\gamma ]))=m^{\prime }(1,0)+(0,1)=(m^{\prime },1)=(m^{\prime }i_{\ast
}[\eta ],j_{\ast }([\gamma ])\in H_{1}(X)\oplus H_{1}(N)$ and thus $
[d]_{X}=m^{\prime }i_{\ast }[\eta ]=m^{\prime }[\eta ]_{X}$.  Hence, by
the definition of the linking number, $m^{\prime }$ must be $lk(d,K)$ and we
are done.

There is a little more that we can say about the curve $d$ given in \ref{ggg}. Recall from Remark \ref{long} that a longitude curve on $\partial N$ is a curve that generates $H_{1}(N)$ and is positively oriented with the knot $K$. We have proved that the curve  curve $d$ defined in \ref{ggg} actually satisfies this condition : it is obtained from the intersection of a framed knot with $\partial N$, and it generates $H_{1}(N)$ since it is $1$ in this group as we have shown. Hence this a longitude curve can be written in terms of the meridian and the preferred longitude as $m[\eta ]+[\gamma ]$ for some $m\in \mathbb{Z}$. In other words, the curve $d$ as given in \ref{ggg} parameterizes all longitude curves on the $\partial N$. We record this in the following Theorem.

\begin{theorem}
\label{final}
Let $(K,V)$ be a framed knot in $S^{3}$. Suppose that $N$ is a tubular
neighborhood of the knot $K\ $that intersects the ribbon of $K$ in a simple
closed curve $d$. Then $d$ is the a longitude curve on $\partial N$. Furthermore $d$ can be written as $d=m[\eta ]+[\gamma ]$ where $m\in \mathbb{Z}$ is the framing integer of $(K,V)$ and $\eta$ and $\gamma$ are the  meridian and the preferred longitude respectively. Conversely, any longitude curve on $\partial N$ can be written as $m[\eta ]+[\gamma ]$ for some $m \in \mathbb{Z}$ and it corresponds to a framing $V$ of the knot $K$ whose framing integer equals to $m$. 
\end{theorem}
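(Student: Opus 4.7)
My plan is to prove the three assertions in sequence, relying on the Mayer--Vietoris computation of $H_1(\partial N)$ and the characterizations of $\eta$ and $\gamma$ already established, together with Theorem~\ref{per long} identifying the class of a curve in $H_1(X)$ with its linking number with $K$.

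First, I would show that $d$ is a longitude curve. The framed knot $(K,V)$ determines a ribbon, i.e.\ an embedded annulus $A\subset S^3$ one of whose boundary components is $K$. Taking $N$ sufficiently small and slightly pushing the ribbon if necessary, the other boundary component of $A\cap N$ is precisely $d$, and $A\cap N$ is an annulus inside the solid torus $N$ from $K$ to $d$. This annulus furnishes a homotopy in $N$ between $K$ and $d$, so $[d]_N=[K]_N$, and the latter generates $H_1(N)\cong\mathbb{Z}$. After orienting $d$ by the orientation induced from $A$, the curve $d$ is positively parallel to $K$. By the definition recalled in Remark~\ref{long}, this is exactly the statement that $d$ is a longitude curve on $\partial N$.

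Next, for the formula $d=m[\eta]+[\gamma]$, I would write $[d]=a[\eta]+b[\gamma]$ in $H_1(\partial N)=\mathbb{Z}[\eta]\oplus\mathbb{Z}[\gamma]$ and determine $a,b$ by applying the isomorphism $i_\ast\oplus j_\ast$ of \eqref{main map}. Because $j_\ast[\eta]=0$ and $j_\ast[\gamma]=[K]$, the equality $j_\ast[d]=[K]$ from the previous paragraph forces $b=1$. Similarly, because $i_\ast[\gamma]=0$, applying $i_\ast$ yields $i_\ast[d]=a\,[\eta]_X$ in $H_1(X)$. On the other hand, Theorem~\ref{chara}(1) (in the guise used in Theorem~\ref{per long}) gives $i_\ast[d]=lk(d,K)\,[\eta]_X$, so $a=lk(d,K)$. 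Finally, $d$ is the push-off $K'$ of $K$ along the framing $V$, so by Definition~\ref{pushoff} the integer $lk(d,K)$ is the self-linking number, which by the earlier identification equals the framing integer $m$. Thus $a=m$, proving $d=m[\eta]+[\gamma]$.

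For the converse, I would start with an arbitrary longitude curve $d'$ on $\partial N$. By Remark~\ref{long}, $d'$ satisfies $j_\ast[d']=[K]$, so applying $(i_\ast\oplus j_\ast)$ and running the previous paragraph in reverse shows $[d']=m[\eta]+[\gamma]$ for some $m\in\mathbb{Z}$. Since the coordinates of this class in the basis coming from Theorem~\ref{mainmain} are $(m,1)$, the hypothesis of that theorem is satisfied, so (after an isotopy on $\partial N$) we may assume $d'$ is itself a simple closed curve. Because $[d']_N$ generates $H_1(N)$, there is an embedded annulus $A'\subset N$ from $K$ to $d'$ (inside the solid torus, any simple closed curve on $\partial N$ homologous to the core cobounds such an annulus with the core, for instance by straight-line interpolation in a product structure $N\cong D^2\times S^1$ matching $d'$ to $\{0\}\times S^1$). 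The annulus $A'$ is a ribbon, hence determines a framing $V$ of $K$ whose push-off is $d'$. Running the direct calculation above shows the framing integer of $V$ equals $m$.

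The main obstacle in this plan is the realization step in the converse: converting the homology/isotopy class $d'$ on $\partial N$ into a genuine embedded ribbon in $S^3$. This needs Theorem~\ref{mainmain} to obtain a simple closed representative on $\partial N$, together with the elementary but not entirely trivial fact that any simple closed curve on $\partial N$ homologous to the core of the solid torus cobounds an embedded annulus with the core; I would justify the latter by choosing a product structure $N\cong D^2\times S^1$ for which $d'$ is the graph of a map $S^1\to \partial D^2$. All other steps are direct applications of the preceding theorems.
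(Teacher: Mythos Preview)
Your proposal is correct and follows essentially the same route as the paper: write $d$ in the basis $\{[\eta],[\gamma]\}$ of $H_1(\partial N)$, apply the Mayer--Vietoris isomorphism $i_\ast\oplus j_\ast$ to read off the coefficients, and then identify the $[\eta]$-coefficient with $lk(d,K)$ via Theorem~\ref{chara}(1) (equivalently Theorem~\ref{per long}), which in turn is the self-linking number and hence the framing integer. Your treatment is in fact more careful than the paper's in two places: you explicitly justify the coefficient $b=1$ via the annulus in $N$ (the paper simply asserts $d$ winds once around the longitude), and in the converse you actually construct the ribbon realizing the given longitude curve as a framing, whereas the paper leaves this implicit.
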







\section{Seifert surfaces and zero-framed knots}

In this section we give the Seifert framing which is a type of framing that can be associated with a knot $K$. We prove that this framing can be used to characterize the zero framing of a knot.

\begin{definition}
Given a Seifert surface for a knot, the associated \textit{\textbf{Seifert framing}}
is obtained by taking a vector field perpendicular to the knot and inward tangent to the Seifert surface.
\end{definition}

The Seifert framing provides a useful characterization for the zero-framing of a knot.

\begin{theorem}\label{thm5.2}
The self-linking number obtained from a Seifert framing is always zero.
\end{theorem}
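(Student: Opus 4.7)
The plan is to reduce the claim to Theorem \ref{chara}, by showing that the pushoff along the Seifert framing is null-homologous in the knot complement. Let $S$ be the Seifert surface used to define the framing, and let $V$ be the Seifert framing vector field, perpendicular to $K$ and tangent to $S$. Let $K'$ denote the knot obtained by pushing $K$ slightly along $V$. By construction of the Seifert framing, $K'$ lies on $S$ itself, parallel to and just inside $\partial S = K$.

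Next, I would exhibit an explicit surface in $S^{3}\setminus K$ bounded by $K'$. Take a sufficiently narrow collar neighborhood $C \cong \partial S \times [0,\epsilon)$ of $\partial S$ inside $S$, chosen so that $K'$ sits at the level $\partial S \times \{\epsilon/2\}$. Define $S' := S \setminus (\partial S \times [0, \epsilon/2))$, the portion of $S$ beyond this collar. Then $\partial S' = K'$ and $S' \cap K = \emptyset$, so $S'$ is a surface embedded in $S^{3}\setminus K$ with boundary $K'$. Consequently $[K'] = 0$ in $H_{1}(S^{3}\setminus K)$.

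By Theorem \ref{chara}(1), writing $[K'] = s[\eta]$ with $\eta$ the standard meridian generator, we have $\mathrm{lk}(K,K') = s$. Since $[K'] = 0$, we conclude $\mathrm{lk}(K,K') = 0$, and this is by definition the self-linking number of the framed knot $(K, V)$.

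The only subtlety is verifying that the pushed-off curve $K'$ genuinely lies on $S$ (so that the collar construction produces a bounding surface in the complement of $K$); this uses precisely the definition of the Seifert framing as being tangent to $S$. Once that is in hand, the argument is immediate from the homological characterization of the linking number already established. One can alternatively phrase the same proof using Theorem \ref{chara}(2): the Seifert surface for $K'$ provided by $S'$ is disjoint from $K$, so $K \cdot S' = 0$.
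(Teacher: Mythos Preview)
Your proof is correct and follows essentially the same approach as the paper: both show that the pushoff $K'$ bounds the truncated Seifert surface in the knot complement, hence is null-homologous there, and then invoke the homological characterization of the linking number. The only cosmetic difference is that the paper phrases the conclusion via the preferred longitude and Theorem~\ref{here}/\ref{per long}, whereas you cite Theorem~\ref{chara}(1) directly; the paper's alternative argument using $S\cdot K'$ matches your closing remark as well.
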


\begin{proof}
Suppose that $N$ is a tubular neighborhood of a knot $K$ and $X$ its
exterior. Let $S$ be the Seifert surface of $K$ and let $K^{\prime }$ be the
intersection curve $\partial N\cap S.$ It is clear that $K^{\prime }$ is a
simple closed curve on $\partial N.$ The curve $K^{\prime }$ bounds the
Seifert surface $S$ in $X$ and hence it is trivial in $H_{1}(X).$ Thus, $%
K^{\prime }$ is precisely the preferred longitude and by Theorem \ref{here} we conclude that $lk(K^{\prime },K)=0$. Hence the framing
obtained from the Seifert surface is zero.
\end{proof}

Alternatively, 
Theorem~\ref{thm5.2} can be seen to be true by utilizing a different definition of the linking number. Namely, let $K$ and $K^{\prime}$ as stated in Theorem~\ref{thm5.2}. 
Recall that $lk(K,K^{\prime })=S \cdot K^{\prime }$ where $S \cdot K^{\prime }$ is the intersection number between the surface $S$ and the knot $K^{\prime }$. From the way we construct $K^{\prime }$ we see the intersection number between $S$ and $K^{\prime }$ is zero. It is worth mentioning here that even though it looks as if there are infinitely many intersections between $S$ and $K^{\prime }$, these intersections are not transverse intersections and hence they do not contribute to the number $S \cdot K^{\prime }$. In other words, one needs to push the surface $S$ a little bit away back from the knot $K^{\prime }$ so that it does not intersect with $K^{\prime}$.

\section{Framing characterization using the fundamental group of $SO(2)$}

Another characterization of the framing of a knot is obtained by considering the fundamental group $\pi_{1}(SO(2))$ of the special orthogonal group $SO(2)$.  While less intuitive, this description will hint at how exactly we can make precise this notion of what it means to count ``twists'' around a knot.  
Recall that $SO(2)$ is the group of all $2\times2$ real matrices with determinant equal to $1$, and that geometrically these linear maps comprise the set of rotations in the plane about the origin.  As such, the group is topologically a circle, and can be parametrized by an angle $\theta$ corresponding to the angle of the rotation.  

Suppose that we are given a knot $K$ and a vector field $V$ representing the zero-framing of $K$, and choose an arbitrary orientation on the knot. We can consider this as a sort of reference framing that is used to create a well-defined map from the set of framings of $K$ into $\pi_{1}(SO(2))$.  For every point on $K$, construct a vector $N$ so that the ordered set $(\vec K,\vec V,\vec N)$ forms a right-hand basis (RHB) of $\mathbb{R}^{3}$. In this context, we treat $\vec K$ as a nonzero vector tangential to the knot, with the direction dictated by the knot's orientation. As such, it suffices to choose $\vec N = \vec K \times \vec V$ to get the desired RHB.  The knot $K$ is an embedded circle,  so we can identify points of the knot with their preimages in this embedding to get a parametrization $t \rightarrow e^{2\pi it} \rightarrow K(t)$ in terms of the standard unit circle $S^{1}$.  At every point $K(t)$, the associated vectors $\vec V(t)$ and $\vec N(t)$ lie in a plane perpendicular to the knot (see Figure ~\ref{basis}). 

Let $(K,W)$ be any choice of framing on K.  Using the same process as before, we associate to every point $K(t)$ a pair of nonzero orthogonal vectors $(\vec W(t), \vec M(t))$ that span the plane normal to the knot.  Choose an element $0 \leq \theta(0) < 2\pi$ of $SO(2)$ that represents the rotation of the basis $\vec V(0),\vec N(0)$ around the axis given by $\vec K(0)$ to obtain $(\vec W(0),\vec M(0))$.  Proceeding \textit{backwards} along the knot, we can create a smooth map $\phi_{W}:t \rightarrow \theta(t)$ that encodes the rotation of the framing $W$ with respect to $V$.  Note that $\phi(1) = 2\pi m$ must be a multiple of $2\pi$, because $\vec V(0) = \vec V(1)$ and $\vec W(0) =\vec W(1)$.  This multiple is exactly the framing integer.  

\begin{remark}
The decision to construct $\phi_{W}$ by proceeding backwards along the knot is made to ensure consistency with other characterizations of the framing integer given in this text, such as self-linking number.  We could alternatively have defined the map $\phi_{W}$ by proceeding forwards along the knot and negating the value $m$.  Or we could have chosen to reverse the parametrization of SO(2).  It is worth taking a minute to consider what other choices were made in this construction that could have the effect of changing the framing integer. 
\end{remark}

\begin{remark}
The reason for presenting this characterization of a framing as an element of the fundamental group of $SO(2)$ (ie, the circle) rather than simply stopping at the framing integer is because this interpretation allows us to easily see that a framing $W'$ constructed from $W$ by cutting the ribbon and giving it a full twist before reconnecting the ends of the ribbon is indeed distinct from $w$.  The effect of this operation would  add/subtract $2\pi$ to the value of $\phi_{W}(1)$, thus creating a distinct element of the fundamental group of SO(2). This is exactly analogous to the traditional calculation of the fundamental group of the circle using covering maps.  Any ambient isotopy that could map $W$ to $W'$ would induce a homotopy between $\phi_{W}$ and $\phi_{W'}$; since we know they are not homotopic, no such isotopy can exist. 
\end{remark}
On the other hand a loop $f$ in $SO(2)$ gives rise to a continuous family of elements in $SO(2)$ which can be used to construct a smooth vector field on $K$.  Perturbing the curve $f$ inside $S^1$ in a way that respects its homotopy type will change the vector field $V$ only up to some ambient isotopy. This yields a bijection between elements of $\pi_{1}(SO(2))$, and the set of equivalence classes of framings of $K$. 

\begin{figure}[htb]
  \centering
   {\includegraphics[scale=0.2]{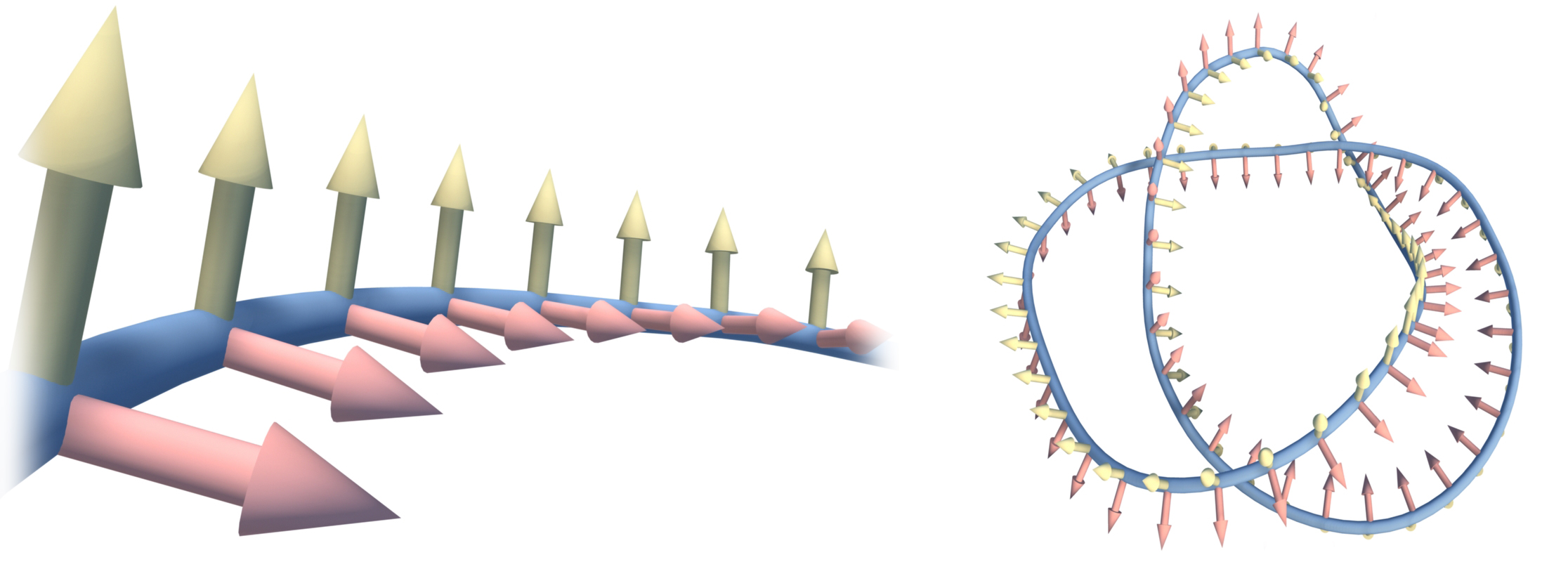}
	 \put(-100,-20){(B)}
	 \put(-350,-20){(A)}
  \caption{ (A) Choosing an orthogonal basis at every point on the knot.  The yellow vectors represent the framing of the knot and the red vectors are chosen to be orthogonal to both the framing and the knot itself. (B) An example of a choice of orthogonal bases on every point in knot.}
  \label{basis}}
\end{figure}

\section{Framed knots and $3$-manifolds}


In the introduction of this article we mentioned a theorem of Lickorish-Wallace (Theorem~\ref{LW} below). In this section we will give the necessary ingredients needed to state the theorem and illustrate how framed knots are utilized in lower-dimensional topology. 

Lickorish was trying to answer a question posed by Bing \cite{Bing} who gave a partial solution to the Poincar\'{e} conjecture.  Bing's question states: "Which compact, connected $3$-manifolds can be obtained from the $3$-sphere using the following process: deleting a disjoint polyhedral tori and sewing them back in a different way". As we will see in this section, all closed and orientable $3$-manifolds can be realized in this way and, importantly for us, framed knots play a central role in this.

\begin{theorem}\cite{Lickorish}\label{LW}
Any closed, orientable, connected $3$-manifold can be realized as integer surgery on some framed link in the $3$-dimensional sphere $S^{3}.$
\end{theorem}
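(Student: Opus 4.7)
The plan is to prove the Lickorish--Wallace theorem by reducing it to a statement about mapping class groups of surfaces, then realizing the generators of those groups via framed surgery. I would proceed in four main steps.

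First, I would invoke the theorem that every closed orientable connected $3$-manifold $M$ admits a \emph{Heegaard splitting}: there is a closed orientable surface $\Sigma_g$ of some genus $g$ and two handlebodies $H_1, H_2$ of genus $g$ with $M = H_1 \cup_\phi H_2$, where $\phi : \partial H_2 \to \partial H_1$ is an orientation-reversing homeomorphism. This is classical and follows from the existence of a triangulation together with a regular neighborhood of the $1$-skeleton. Next, I would recall that $S^3$ itself admits a genus-$g$ Heegaard splitting $S^3 = H_1 \cup_{\phi_0} H_2$ for every $g \geq 0$ (the standard embedding of an unknotted handlebody). Thus, producing $M$ from $S^3$ reduces to modifying the gluing homeomorphism along $\Sigma_g$ from $\phi_0$ to $\phi$, that is, to realizing the element $\phi_0^{-1} \circ \phi$ of the mapping class group $\mathrm{MCG}(\Sigma_g)$.

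Second, I would appeal to Lickorish's theorem that $\mathrm{MCG}(\Sigma_g)$ is generated by Dehn twists along a finite collection of simple closed curves on $\Sigma_g$. So it suffices to show that the operation of pre-composing the gluing by a single Dehn twist $T_c$, along an embedded curve $c \subset \Sigma_g \subset S^3$, can be realized by an integer surgery on a framed knot in $S^3$.

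Third, and this is the geometric heart of the argument, I would prove the following key lemma: if $c$ is a simple closed curve on the Heegaard surface $\Sigma_g \subset S^3$, then changing the gluing from $\phi_0$ to $T_c^{\pm 1} \circ \phi_0$ produces exactly the $3$-manifold obtained from $S^3$ by performing $\mp 1$ surgery on $c$, where $c$ is given the framing inherited from $\Sigma_g$ (which, by the discussion of the blackboard framing and Theorem~\ref{final}, corresponds to a specific framing integer on $c$ read off from the surface). The argument is a local cut-and-paste: one excises a tubular neighborhood $N(c)$, observes that regluing by $T_c$ on the boundary torus is precisely a Dehn surgery whose slope differs from the preferred longitude by one meridian (a $\pm 1$ surgery relative to the surface framing), and then accounts for the difference between the surface framing and the zero-framing using the self-linking computations of Section 3. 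This step is where I expect the main obstacle to lie: one must be meticulous with orientations and with the discrepancy between the framing induced by $\Sigma_g$ and the preferred longitude, and verify that the slope is indeed integral.

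Finally, I would assemble the pieces. Writing $\phi_0^{-1} \circ \phi = T_{c_1}^{\epsilon_1} \cdots T_{c_k}^{\epsilon_k}$ with $\epsilon_i \in \{\pm 1\}$, an induction on $k$ using the key lemma exhibits $M$ as the result of integer surgery on the framed link $L = c_1 \cup \cdots \cup c_k \subset S^3$, with framing coefficients determined by the signs $\epsilon_i$ and the surface framings. To make the induction go through cleanly, I would push each curve $c_i$ off $\Sigma_g$ into a nested collection of parallel surfaces so that the $c_i$ become disjoint in $S^3$, which is possible because Dehn twists on disjoint curves commute and because the Heegaard surface has a bicollar. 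This delivers the desired integer surgery description and completes the proof.
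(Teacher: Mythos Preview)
The paper does not prove Theorem~\ref{LW}; it is stated with a citation to \cite{Lickorish} and then used only as background and motivation for the surrounding discussion of surgery. There is therefore no proof in the paper to compare your proposal against.

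For what it is worth, your outline is the classical Lickorish argument and is correct in its architecture: Heegaard splitting, factorization of the gluing map via the Dehn--Lickorish generation of the mapping class group by twists, and realization of a single Dehn twist along $c\subset\Sigma_g$ by an integer surgery on $c$ with the surface framing. Two points would need tightening in a full write-up. In the third step, the surgery slope produced by inserting $T_c^{\pm 1}$ is the \emph{surface-framing} longitude of $c$ plus or minus one meridian, not the preferred longitude plus or minus one meridian; one then invokes Theorem~\ref{final} to see that this is an honest integer slope. In the fourth step, the curves $c_i$ are read off a surface that is being successively twisted, so one must check that pushing them onto parallel copies $\Sigma_g\times\{t_i\}$ in a bicollar \emph{before} any surgery still realizes the composite $T_{c_1}^{\epsilon_1}\cdots T_{c_k}^{\epsilon_k}$ in the correct order; this is true, but it is the place where an induction argument earns its keep.
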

Intuitively, Theorem \ref{LW} can be used in the fundamental quest of 3-manifold topology, to obtain a complete classification of all compact orientable 3-manifolds.
The problem is that this list might have redundancies : two different framed links may correspond to same $3$-manifold. To determine when two links give rise to the same $3$-manifold Kirby \cite{Kirby} studied the necessary moves on framed links, similar to Reidemeister moves, called Kirby moves. More precisely,  Kirby Calculus \cite{Kirby} states that two framed links produce the same $3$-manifold if and only if the links are related by a sequence of moves called Kirby moves.  Thus Kirby calculus in conjunction with Theorem \ref{LW} can be used in the fundamental quest of $3$-manifold topology, to obtain a complete classification of all compact orientable 3-manifolds.

 The term \textit{surgery} mentioned in Theorem \ref{LW} refers to the idea of performing "surgery" on a $3$-manifold. Intuitively, a surgery operation on a $3$-manifold $M$ usually involves removing a manifold with boundary $N$ from $M$ to obtain a $3$-manifold $M^{\prime}$ with a boundary $\partial M$ and then gluing $N$ back to $M^{\prime}$ via a homeomophism $f :\partial N \longrightarrow \partial M$. Choosing the way we glue $N$ to $M$ may provide different $3$-manifolds. There are multiple types of surgeries on $3$-manifolds such as integer surgery and rational surgery. In the remaining part of the paper we will to talk briefly about integer and rational surgeries on the $3$-sphere.
 
 In order to gain intuition we start with a few simple examples to show how one can obtain $3$-manifolds by gluing "simpler" $3-$manifolds together. 
 
 Our first example is the $3$-sphere. It is intuitively clear that one can obtain the $2$-sphere $S^2$ by gluing two $2$-disks $D^2=\{(x,y) \in \mathbb{R}^2| \; x^2+y^2\leq 1\}$ along their boundaries $S^1$. This intuition can actually be generalized to the $3$-sphere and the reader may convince herself that gluing two $3$-disks, $D^3=\{(x,y,z) \in \mathbb{R}^3|x^2+y^2+z^2\leq 1\}$, along their boundaries $S^2$ gives the $3$-sphere $S^3$. See Figure \ref{Gluing to get sphere}. A key fact here is there is only one way to glue $D^3$ to itself. Roughly speaking, there is essentially only one homeomorphism between $S^2$ and itself \footnote{This is a result of Smale's Theorem which states that any orientation-preserving diffeomorphism of the $2$-sphere is smoothly isotopic to the identity map.}. 
 
 \begin{figure}[htb] 
  \centering
   {\includegraphics[scale=0.25]{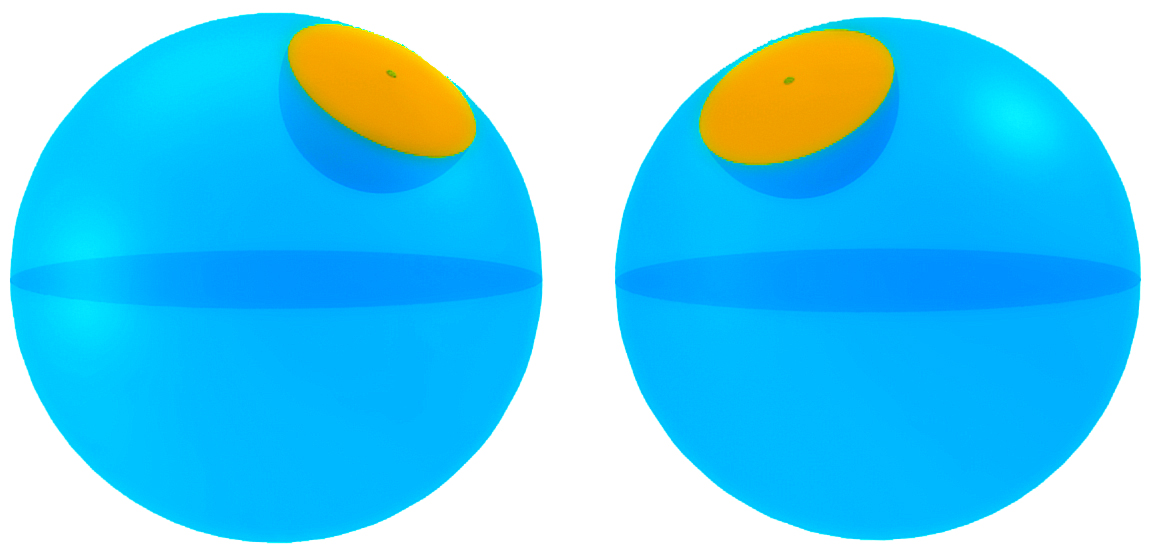}
   \put(-200,110){$p$}
      \put(-100,110){$p$}
  \caption{Gluing two disks $D^3$ along their boundaries $S^2$ gives the $3$-sphere $S^3$.  
A point $p$ on the first boundary is glued to a point $p$ on the second boundary.
Similarly, a small neighborhood of the point $p$ of can be visualized as being partially in the first ball and partially in the second as shown in the figure. In particular the  brown disk on the surfaces of the balls are identified and represent the same points in the $S^3$.}
  \label{Gluing to get sphere}}
\end{figure}

This exhausts the list of $3$-manifold that can be obtained from gluing $D^3$  to itself. We shall not prove this result here. The next natural choice of  simple $3$-manifolds that one can consider is the solid torus. What are the different manifolds that one can obtain by gluing two solid tori along their boundaries? It turns out that in this case we can obtain infinitely many manifolds! We describe this next.

Let $ST_1$ and $ST_2$ be two solid tori. Let $f : \partial ST_1 \longrightarrow \partial ST_2 $ be a homemophism, between their boundaries, that sends the meridian of $\partial ST_1$ to the longitude of $\partial ST_2$. It turns out that the manifold obtained by this gluing is again $S^3$. To see this, denote by $D^2$ the merdional disk of $ST_1$. We thicken the disk $D^2$ a little bit to obtain $D^2\times I$ and cut this part out of $ST_1$. We obtain in this way $D^2\times I$ and another piece that is homeomrphic to $D^3$. See Figure \ref{framedtorus} (A). Now, if we glue $D^2\times I$ to the solid torus $ST_2$ along the longitude, as indicated in Figure \ref{framedtorus} (B), one obtains back a space that is homeomrphic to $D^3$ as well. To finish the gluing process of $ST_1$ and $ST_2$, we need to glue the remaining boundaries together. However, the resulting two manifolds are exactly two $3$-disks and by our earlier discussion there is only one way to glue such two manifolds together along their boundaries. Hence the resulting manifold is again $S^3$. See Figure \ref{framedtorus} (C).

\begin{figure}[htb] 
  \centering
   {\includegraphics[scale=0.067]{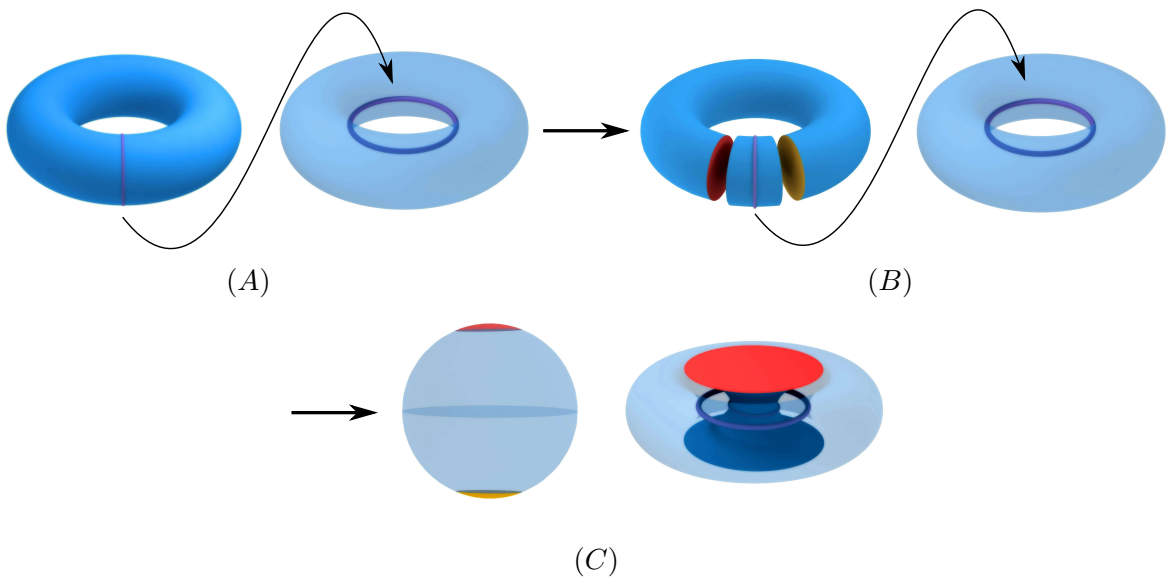}
      \put(-360,90){$(A)$}
 \put(-230,-15){$(C)$}
 \put(-120,90){$(B)$}
  \caption{Gluing two solid tori along the boundaries is determined where we send the meridian. $(A)$ The meridian of the first solid torus is sent to the longitude of the second solid torus. This process can be done in two steps which are shown in $(B)$ and $(C)$.  In $(B)$ we glue $D^2\times I$ to the longitude of the second solid torus. Finally, in $(C)$ we obtain two $3$-disks. Up to isotopy, any  orientation-preserving diffeomorphism of the $2$-sphere is smoothly isotopic to the identity map. Thus, the final manifold is completely determined and in this case it is the $3$-sphere..
  }
  \label{framedtorus}}
\end{figure}

The first that the reader should be aware of from the previous example is that that resulting manifold obtained from gluing $ST_1$ to $ST_2$ by sending the meridian of the first one to the longitude of the second one was completely determined by where we sent the meridian. It turns that that the resulting manifold is always completely determined by the image of the meridian under the gluing homeomorphism. But what are the the other 3-manifolds that one could obtain if we choose to map the meridian of $\partial ST_1$ to the another closed and simple (that is without self-intersection) curve on $\partial ST_2$?


  Recalling Theorem \ref{mainmain}, we can characterize simple closed  curves on the $ \partial ST_2$ by two coprime integers $p$ and $q$, where $p$ is the number of times the curve rounds around the meridian and $q$ are the number of times the curve rounds around the longitude. If we choose to map the meridian of $\partial ST_1$  to a curve $(p,q)$, where $p$ and $q$ are coprime, in $ \partial ST_2$ then the resulting 3-manifold is called a \textit{lens space} and it is denoted by $L(p,q)$. See Figure \ref{rational}.

Finally in the case when we map the meridian of $ST_1$ to the meridian of $ST_2$ then the resulting manifold is homeomorphic to $S^2\times S^1$. We shall not prove this fact here. The reader is referred to \cite{Adams,Rolfsen} for more details.

\begin{figure}[h] 
  \centering
   {\includegraphics[scale=0.08]{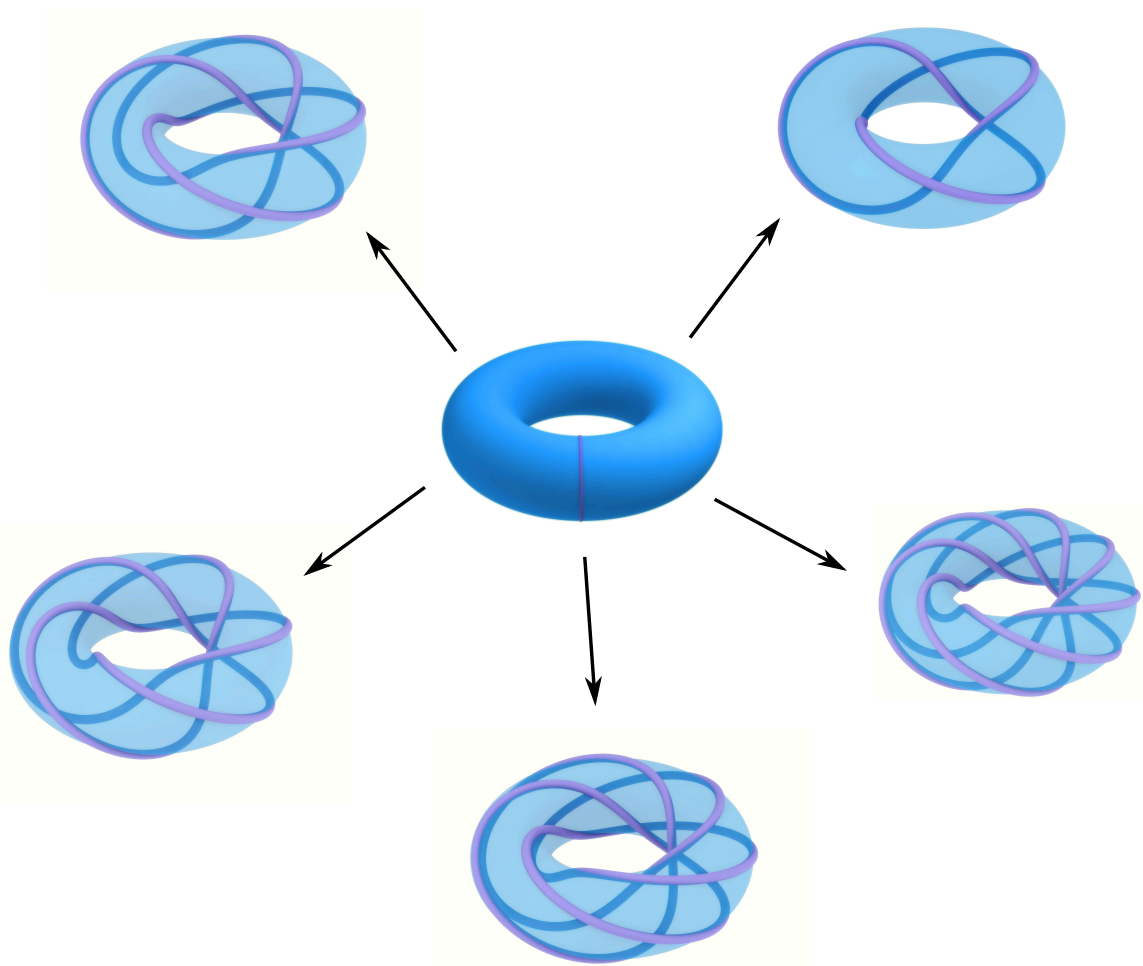}
  \caption{For $p, q$ coprime numbers, the Lens space $L(p,q)$ is obtained by gluing two solid tori along thier boundaries. The gluing map is determined by sending the meridian of of the boundary of the first solid torus to the simple closed curve $(p,q)$ on the boundary of the second solid torus. The figure shows various $(p,q)$ curves on the torus. For all such curves on the torus, and by sending the meridian of the first solid torus to these curves we can generate all lens spaces.}
  \label{rational}}
\end{figure}
 
 We are now ready to see how framed knots can be utilized in obtaining $3$-manifolds. Consider a knot $K$ in the $3$-sphere and let $N(K)$ be a tubular neighborhood of $K$ as before. By cutting open along the torus boundary $\partial N(K)$ of $N(K)$, we obtain the complement  $S^3 \setminus Int(N(K))$ which has a boundary that is homeomorphic to a torus.  Let $h$ be a homeomorphism between the boundaries of $D^2 \times S^1$ and $S^3 \setminus Int(N(K))$. Consider the $3$-manifold obtained by gluing $D^2 \times S^1$ to $S^3 \setminus Int(N(K))$ via the homeomorphism $h$. Just as before the final $3$-manifold  is completely determined by where we send the merdian of $D^2 \times S^1$.

 The $3$-manifold obtained is a closed orientable $3$-manifold and we say that it is obtained from the $3$-sphere by surgery along the knot $K$. This manifold, as we saw before, is completely determined by the image of the meridian. Up to isotopy we can assume that the meridian goes to a $(p,q)$-curve on the torus boundary of the knot where $p$ and $q$ are coprime. Moreover, it can be shown that the surgery that glues the meridian to the $(p,q)$-curve is the same as the surgery that sends the meridian to $(-p,-q)$-curve. Thus this surgery of the $3$-sphere is completely known by the fraction $\frac{p}{q}$ which is called the \textit{surgery index}. We call the above operation on $S^3$ a \textit{rational surgery} with rational index $r=\frac{p}{q}$. 
 
Our earlier discussion about lens spaces $L(p,q)$ imply that these spaces can be obtained by performing a rational surgery on the unknot. We now show how framed linked are naturally related to the notion of surgery.

 \subsection{Integer surgery and framed links}
 
 In this final part we briefly introduce integer surgery and then we show its relationship to framed links. First we state the definition of integer surgery.



\begin{definition}
If the integer $q$ is equal to $ \pm 1$, then we say that we have integer surgery on $S^3$.  
\end{definition}
We now explain the relationship between integer surgery and framed knots, recall that a framed knot $K$ determines a longitude curve on $\partial N$ where $N$ is the solid torus neighborhood of $K$. Moreover, as we illustrated earlier, this curve can be written as a $(p,1)$-curve in the torus where $p$ is the framing integer (Theorem \ref{final}). Hence the information given by a framed knot, the knot and its framing integer, is precisely the same information one needs to perform an integer surgery on $S^3$. Thus, given Theorem \ref{LW}, every compact orientable $3$-manifold can be represented by a link diagram with an integer on each link component. We have turned all of $3$-manifold theory into a version of knot theory!

 As an example of a $3$-manifold obtained from framed knots, recall from our earlier discussion that $S^2\times S^1$ can be obtained by by gluing two solid tori along their boundaries by sending the meridian of the boundary of the first solid torus one to the meridian of the boundary of the second solid torus. Another equivalent way to obtain the same $3$-manifold is to perform an integer surgery on $S^3$ along the zero-framed unknot. Notice here that when $K$ is the zero-framed unknot then the manifold  $S^3 \setminus Int(N(K))$ is homeomorphic to the solid torus. This solid torus is "flipped inside out". In particular, the longitude curve in  $S^3 \setminus Int(N(K))$ corresponds to a meridian curve in the standardly embedded solid torus.


Working with framed link diagrams in the context of three manifold is advantageous because one can utilize a link diagram with its blackboard framing as a well-defined method to denote the $3$-manifold obtained by performing the surgery on $S^3$ along that link. The blackboard framing of the link, along with the link diagram, completely determine the surgery and hence the manifold itself. Hence, framed link diagrams can be used to define $3$-manifold invariants. Indeed, any quantity defined on framed links diagrams that is invariant under $\Omega_2$ and $\Omega_3$ as well as Kirby moves can be considered as an invariant of $3$-manifolds. An interesting family of knots and $3$-manifold invariants called the quantum invariants has been at the center of interest in low-dimensional topology for decades now and framed knots play an important role in these invariants. The Jones polynomial \cite{jones1997polynomial} was the first invariant discovered from this family and then later Kauffman \cite{Kauffman} showed that this invariants can be defined via framed links. For more details about this subject see \cite{Ohtsuki}.

{\bf Acknowledgment:}  The authors would like to thank the referee for fruitful comments which improved the paper.


\end{document}